\crefname{hypothesis}{Hypothesis}{Hypotheses}
\title{Sticky Brownian Motion \\ and its Numerical Solution\thanks{
\funding{N.~B.-R. was supported in part by the NSF under Grant No.~DMS-181637. M.H.-C. was supported in part from the Department of Energy Grant DE-SC0012296 and the Alfred P. Sloan foundation.}}}
\author{Nawaf Bou-Rabee\thanks{Department of Mathematical Sciences, Rutgers University Camden, 311 N 5th Street, Camden, NJ 08102, USA
  (\email{nawaf.bourabee@rutgers.edu}).}
\and Miranda Holmes-Cerfon\thanks{Courant Institute of Mathematical Sciences, New York University, 251 Mercer Street,  New York, NY 10012-1185 USA 
  (\email{holmes@cims.nyu.edu}).}}
\definecolor{myGreen}{rgb}{0.0, 0.5, 0.0}
\definecolor{darkblue}{rgb}{0.2, 0.2, 0.7}
\definecolor{darkred}{rgb}{0.7, 0.2, 0.2}
\newcommand{\eps}{\epsilon}
\newcommand{\dd}[2]{\frac{d #1}{d #2}}
\newcommand{\half}{\frac{1}{2}}
\newcommand{\E}{\mathbb{E}}
\newcommand{\R}{\mathbb{R}}
\begin{document}

\maketitle

\begin{abstract}
Sticky Brownian motion is the simplest example of a diffusion process that can spend finite time both in the interior of a domain and on its boundary. It arises in various applications such as in biology, materials science, and finance. This article spotlights the unusual behavior of sticky Brownian motions from the perspective of applied mathematics, and provides tools to efficiently simulate them. 
We show that a sticky Brownian motion arises naturally for a particle diffusing on $\R_+$ with a strong, short-ranged potential energy near the origin. This is a limit that accurately models mesoscale particles, those with diameters $\approx 100$nm-$10\mu$m, which form the building blocks for many common materials.  
We introduce a simple and intuitive sticky random walk to simulate sticky Brownian motion, that also gives insight into its unusual properties.  In parameter regimes of practical interest, we show this sticky random walk is two to five orders of magnitude faster than alternative methods to simulate a sticky Brownian motion. We outline possible steps to extend this method towards simulating multi-dimensional sticky diffusions.

\end{abstract}

\begin{keywords}
Sticky Brownian motion,  Feller boundary condition, Generalized Wentzell boundary condition,  Fokker-Planck equation, Kolmogorov equation, Sticky random walk, Markov jump process, Markov chain approximation method, finite difference methods  \end{keywords}

\begin{AMS}
 60H10, 65C30 (60J60, 60J65, 35K05, 35K20, 65M06)
\end{AMS}



\section{Introduction}

Sticky diffusion processes are solutions to stochastic differential equations (SDE) which can `stick' to, i.e. spend finite time on, a lower-dimensional boundary.  The sticking is reversible, so the process can hit the boundary and leave again, and while on the boundary it can move according to dynamics that are different from those in the interior, even when continuously extended to the boundary.  A simple example is a (root-2) Brownian motion which can stick to the origin, called a sticky Brownian motion, whose forward and backward Kolmogorov equations are identically $\partial_t f =  \partial_{xx}f$ with  boundary condition $\partial_x f|_{x=0} = \kappa \partial_{xx} f|_{x=0}$, where $\kappa \geq 0$ is a parameter measuring how sticky the boundary is. On the other hand Dirichlet ($f|_{x=0} = 0$), Neumann ($\partial_x f|_{x=0} = 0$), Wentzell ($\partial_{xx} f|_{x=0} = 0$) and Robin ($\partial_x f|_{x=0} = \kappa f_{x = 0}$) boundary conditions lead to stopped, reflected, absorbed and elastic Brownian motions, respectively \cite{KaTa1981}.  

Discovered in the 1950s by Feller in an attempt to find the most general behaviour of one-dimensional diffusion processes at a boundary \cite{Feller:1952te, Peskir:2015be}, 
sticky diffusions have been studied in the theoretical mathematical literature for several decades. Probabilists have studied the construction and properties of sticky diffusions via martingales or other representations such as random walks in random environments or interacting particle systems \cite{Feller:1954go,Feller:1957ua,Venttsel:1959gb,Ito:1963wl,Stroock:1971wi,Ku1976B,IkWa1989,Amir:1991wb,Warren:1997wg,nguyen2018sticky,nguyen2019fick,barraquand2019large}, and very recently, sticky diffusions have been used to expand the scope of probabilistic coupling techniques \cite{Ho2007, EbZi2016,Zi2017}.
In PDE theory, analysts have studied well-posedness and semigroup generation for parabolic and elliptic problems with sticky boundary conditions, called generalized Wentzell or Wentzell-Robin boundary conditions in the PDE literature   \cite{luo1991linear,zeng1994linear,apushkinskaya2000survey,apushkinskaya2000venttse,favini2002heat,MR1695147}.

In applications, sticky diffusions arise in a variety of models of physical and natural processes which are naturally described by a set of variables that can change dimension. Examples arise in biology, where
molecules diffuse near a sticky wall or cell membrane \cite{Gandolfi:1985do,Graham:1995uh}; in epidemics, \cite{Calsina:2012ee}, where the concentration of pathogens in an individual can be sticky at concentration level zero; 
in operations research, as a particular limit of storage processes   modeling queues, inventories, insurance risks, etc \cite{Harrison:2016kq}; and in  finance, where the evolution of interest rates can be sticky near zero \cite{Longstaff:1992hv,Kabanov:2007ga}.

Our own interest is in the dynamics of mesoscale particles, those with diameters of $\approx 100nm-10\mu m$, which occur widely and form the building blocks of common materials like paint, toothpaste, concrete, ketchup, and many others \cite{Lu:2013dn}. 
Such particles interact attractively over ranges typically much smaller than their diameters \cite{Manoharan:2015ko,HolmesCerfon:2017hz}. Remarkably, systems with short-ranged interactions are often insensitive to the exact shape of the attractive well of the interaction potential, with most behaviour depending only on one or two parameters such as some combination of the well depth and well width  \cite{Noro:2000bi,Platten:2015dj}. Therefore, it is often effective to model such systems in the \emph{sticky limit}, where the well width is taken to zero, and the well depth to infinity, such that the probability of forming a contact remains constant. In this limit the dynamics of the collection of particles approaches  a sticky diffusion process, with a boundary when a pair of particles are exactly in contact \cite{Baxter:1968dh,Stell:1991va,Fantoni:2006bz,Meng:2010gsa,HolmesCerfon:2013jw,Kallus:2017hi}.  The sticky boundary conditions behave in a similar way to holonomic constraints in molecular dynamics, which eliminate fast bond-length or bond-angle vibrations, and thus reveal the molecular structure more clearly and allow a larger time step in simulations \cite{RyCiBe1977}. 

In the sticky limit, one may be interested in studying the forward and backward Kolomgorov equations to obtain analytical, asymptotic, or numerical solutions that give physical insight, or in simulating the sticky processes themselves to obtain pathwise results. However, neither of these goals is readily attainable: the first, since sticky processes are relatively unknown in the applied math community, techniques to study them are rare 
 and usually invented on a case-by-case basis.   Indeed, by-and-large, applied mathematicians focus on PDEs with classical boundary conditions, and sticky diffusions are beyond this scope, since their transition probability measures have a part that is singular with respect to the Lebesgue measure in the domain.  The second, because there are currently no methods to simulate a sticky diffusion directly: there is no practical way to extend existing methods for discretizing SDEs based on choosing discrete time steps, such as Euler-Maruyama or its variants \cite{KP, Go2001, BoGoTa2004}, to sticky processes; a rough explanation for why is that in these methods one will never hit the boundary exactly. One can approximate a sticky diffusion by a reflecting diffusion with an artificial force near the boundary to encourage the particle to stay there when it gets near, but for a good approximation, the force must be strong and short-ranged. Most SDE solvers are explicit, especially in molecular dynamics applications where evaluating forces is the most costly step, so one must take a prohibitively small timestep to resolve these forces, which unfortunately, severely limits the timescales one can simulate. 

Our aim in this article is twofold: one, we wish to bring the topic of sticky diffusions and their associated PDEs to the attention of the applied math community, and to introduce tools to study them from an applied math perspective. To this end, we show  how a sticky Brownian motion arises as a limit of reflected Brownian motions with a strong short-ranged force at the origin, and discuss the limiting forward and backward equations, which must be used with care because of their unusual boundary conditions (Section \ref{sec:SBM}.)  
Two, we wish to introduce a numerical method to simulate a sticky diffusion, which simulates the process directly without introducing an artificial force, and which allows one to take a relatively large timestep.
The method is based on discretizing the increments of the process in \emph{space}, rather than in time, and constructing a Markov jump process whose generator locally approximates the generator of the sticky diffusion (Section \ref{sec:1dnumerics}.) We derive a Feynman-Kac formula to show that this Markov jump process can be used to solve certain PDEs to second-order accuracy in the spatial step. 
Basic implementations of the two main numerical approximations used in the paper are provided in Appendix \ref{sec:listings}.

Throughout the paper we focus on a one-dimensional sticky Brownian motion, because this illustrates most of the key ideas and differences from traditional diffusion processes with a minimum of technical difficulties. 
We discuss the connection between our approaches to understanding sticky Brownian motion and those taken in the earlier probability literature (Section \ref{sec:1dprobability}.) We expect the methods we introduce to be fully adaptable to higher-dimensional diffusion processes, and outline the steps required to do so in the conclusion (Section \ref{sec:conclusion}.)

\section{Sticky Brownian motion}\label{sec:SBM}


In this section we consider how a sticky Brownian motion (SBM) arises naturally for a particle diffusing in a potential energy landscape that has a strong, short-ranged potential well near the origin.\footnote{In Sections \ref{sec:SBM}-\ref{sec:1dFK}, what we call a sticky Brownian motion is traditionally called a root-2 sticky Brownian motion, because it is scaled by a factor of $\sqrt{2}$ from a traditional Brownian motion. In Section \ref{sec:1dprobability} we use SBM to refer to a  traditional sticky Brownian motion.}  In turn, this motivates the generator and the backward and forward Kolmogorov equations for an SBM, equations and operators which require working in an unusual function space because the transition probabilities will have a singular part.  

\subsection{Setup}

Consider a diffusion process $X^\epsilon_t$  on $\mathbb{R}_{\ge 0}$, depending on a parameter $\epsilon > 0$, which solves
\begin{equation}\label{1d:dX}
dX^\epsilon_t = -\partial_xU^\epsilon(X^\epsilon_t)dt + \sqrt{2}\: dW_t\;, 
\end{equation}
with a reflecting boundary condition at the origin. 
Here $W_t$ is a standard Brownian motion, and $U^\epsilon(x):\R\to \R$ is a  function parameterized by $\epsilon$. If $X^\epsilon_t$ is the position at time $t$ of a particle moving on the real axis, then the force it feels is $-\partial_xU^\epsilon(X^\epsilon_t)$. 
Recall that, if $\int_{0}^{\infty} e^{-U^{\epsilon}(x)} dx < \infty$,
then $e^{-U^{\epsilon}(x)}$ is the non-normalized stationary (or equilibrium) probability density of $X^\epsilon_t$. 

Equation \eqref{1d:dX} is a special case of the \emph{Brownian dynamics equations}, which are a good model for the dynamics of mesoscale particles in a fluid \cite{Frenkel:2001uy}. For a more general system of particles, $X_t^\epsilon$ would be the configuration, a vector of particle positions, $U^{\epsilon}(x)$ would represent the potential energy of a particular configuration (nondimensionalized by temperature), which is usually a sum of the potential energy between each pair of particles, and the equations may additionally include a friction tensor, depending on the configuration and the velocity, modeling hydrodynamic interactions between particles. We specialize to a single scalar equation and ignore friction, but still think of $U^\epsilon(x)$ as a potential energy, representing the energy of the particle $X_t^\epsilon$ as a function of its distance to another particle.  Physically, this equation could be realized by  holding one particle in place at the origin while another is particle moves on a line. 

An important point is that for mesoscale particles, $U^\epsilon(x)\approx 0$ outside an interval that is very narrow compared to the particles' diameters 
\cite{Manoharan:2015ko,HolmesCerfon:2017hz}. For example, for particles interacting with a so-called ``depletion'' interaction \cite{Asakura:1954jy}, the range of the interaction in a typical experiment was estimated to be about 5\% of the particles' diameters \cite{Meng:2010gs}. Therefore, when two particles' surfaces are within this range of each other, they stay close together for a long time, but when they are further apart they don't feel each other at all, and diffuse independently. An even shorter range is achieved by particles that interact via sticky single-stranded DNA coated on their surfaces: here, the range of the interaction is the average radius of a coiled DNA strand, which is typically around 10nm, about 1\% of the diameter of a 1$\mu$m particle \cite{Wang:2015ep}. Such DNA-coated particles are studied extensively because the DNA allows one to code complex interactions between different types of particles, and hence, to program them to assemble into a great many different materials
\cite{Macfarlane:2011fh,Casey:2012hx,Wang:2015ep,Rogers:2016bd,Zhang:2017kw}.

\subsection{Assumptions on the potential energy function}

\begin{figure}\centering
\includegraphics[width=0.5\textwidth]{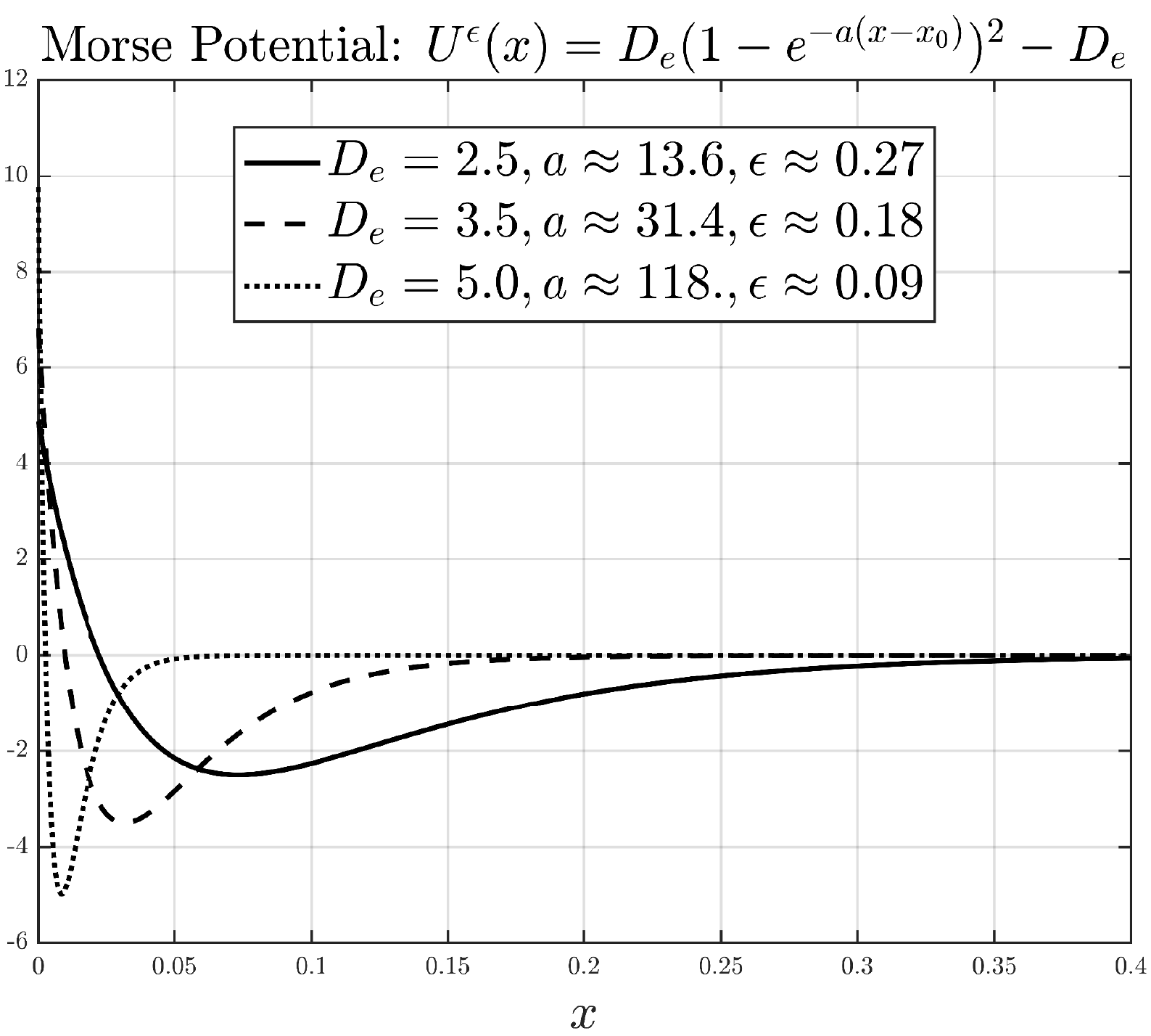} 
\caption{Plots of Morse potential energy functions (see Example~\ref{ex:morse}) with parameters $\kappa = 1$, $D_e$ as indicated in the figure legend,  $a = \sqrt{\pi}e^{D_e} /(\kappa \sqrt{D_e})$,  and $x_0 = 1/a$. This choice of parameters is motivated by the asymptotic condition $\lim_{\epsilon \to 0} \int_0^{\epsilon} e^{-U^{\epsilon}(x)} dx = \kappa$ where $\epsilon = 1/\sqrt{a}$.  The figure illustrates that in this sticky limit the range of the Morse potential shrinks like $\epsilon$, whereas the depth increases but more slowly like $|\log{\epsilon}|$.  
}\label{fig:Ux}
\end{figure}

With these remarks in mind, we consider a family of potential energy functions $(U^\epsilon(x))_{\epsilon>0}$ with a narrow, deep attractive well, which becomes narrower and deeper as $\epsilon \to 0$. 
We consider the dynamics of $X^\epsilon_t$ as $\epsilon \to 0$, and call this the \emph{sticky limit}. 
Specifically, we impose the following assumptions on $(U^\epsilon(x))_{\epsilon>0}$. 

\begin{assumption} \label{A123}
For any $\epsilon>0$,  $U^\epsilon$ is a function in $C^2(\R)$ satisfying:
\begin{itemize}[nosep]
\item[(A1)] $U^\epsilon(x)$, $\partial_xU^\epsilon(x)$, $\partial_{xx}U^\epsilon(x) \le O(\epsilon)$ for $x \ge \epsilon$;
\item[(A2)] $U^\epsilon(x)$ possesses a unique local minimum in $(0, c\epsilon)$ with minimizer $x_0$ and no local maximum for $x>0$;
\item[(A3)] There exists $\kappa \ge 0$ such that
\begin{equation} \label{kappa}
\lim_{\epsilon\to 0} \int_{0}^{\epsilon} e^{-U^{\epsilon}(x)}dx = \kappa\,.
\end{equation}
\end{itemize}
 	 \end{assumption}

\smallskip


We briefly comment on the physical interpretation of (A1)-(A3). 
Assumption (A1) ensures the potential and its first two derivatives are negligible outside of the interval $(0,\epsilon)$, which we call the \emph{boundary layer}. Outside the boundary layer, $X^\epsilon_t$ feels virtually no force and simply diffuses. 

Assumption (A2) ensures that the dynamics in \eqref{1d:dX} has at most one timescale within the boundary layer. 
A typical timescale for a diffusion process is its mean first passage time (MFPT) to overcome an energy barrier $\Delta U$. 
Under Assumption (A2), the barrier to leaving the interval $(0,\epsilon)$ is the depth of the potential at its minimum, 
leading to an MFPT of approximately $\epsilon e^{\Delta U}$ \cite{gardiner}. 
The assumption rules out pathological potentials with many large  oscillations in the boundary layer, which would give rise to longer dynamical timescales in the boundary layer.

Assumption (A3) is the one that gives rise to stickiness at the origin. It requires that the measure of $(0, \epsilon)$ with respect to the weighted Lebesgue measure $e^{-U(x)} dx$, or in the language of physics, the partition function for this interval, approaches a constant. This constant $\kappa$ determines how sticky the origin is -- larger $\kappa$ means the process will spend longer near the origin on average. For this reason we call $\kappa$ the \emph{sticky parameter}.  Applying Laplace asymptotics to \eqref{kappa} shows that 
\[
\sqrt{2\pi}\lim_{\epsilon \to 0}\frac{\epsilon \: e^{-U^\epsilon(x_0)}}{\sqrt{\partial_{xx}U^\epsilon(x_0)}} = \kappa\,,
\]
where $x_0$ is the minimizer of $U^\epsilon(x)$ in $(0,\epsilon)$. 
Ignoring the second derivative shows that the depth must scale very nearly as $|U^\epsilon(x_0)|\sim |\log \epsilon|$, the logarithm of the width of the potential. This implies the timescale computed from the MFPT for leaving the boundary layer  is $\epsilon e^{\Delta U}\sim O(1)$. 
If the scaling of the depth of the well is larger than $|\log \epsilon|$, then large deviation theory would be more appropriate to describe the dynamics of the limiting process \cite{Dembo,grafke2018}. However, if the scaling is smaller, then the limiting process spends no time on the boundary, and simply reflects off of it, as we will see momentarily.

The requirement that $U^{\epsilon}$ has two derivatives is not necessary, but is included to simplify our calculations and avoid dealing with discontinuities in the coefficients of \eqref{1d:dX}. An example of a potential energy function which doesn't satisfy this condition, but which is commonly used to model short-ranged potentials, is the square-well potential.\footnote{Square-well potentials have the form $U^\epsilon(x) = C_\epsilon$  for $x\in [0,\epsilon]$, $U^\epsilon(x) = 0$ for $x > \epsilon$, $U^\epsilon(x) = \infty$ for $x<0$; where $C_\epsilon$ is a constant which depends on $\epsilon$.} By introducing a smooth approximation, we expect our asymptotic results to hold for a square-well potential as well, though we do not pursue this here. 

\medskip

Assumption~\ref{A123} can be verified for two families of potentials frequently used to model attractive interactions between mesoscale particles, the Morse and generalized Lennard-Jones potential energy functions
\cite{Doye:1995tj, Malins:2009dt, Meng:2010gsa,Wales:2010jp,Calvo:2012bw, Zeravcic:2014it}. 


\begin{example}[Morse Potential] \label{ex:morse}  Fix $\kappa \ge 0$, let $\epsilon>0$, and consider the potential energy function defined by 
\[
U^{\epsilon}(x)  = D_e (1-e^{-a (x - x_0)})^2 - D_e
\] 
with parameters $a=1/\epsilon^2$, $x_0 = \epsilon^2$, 
and  $D_e$ defined implicitly via $e^{D_e} \frac{\sqrt{\pi}}{ a \sqrt{D_e }}  = \kappa$.
This potential is illustrated in Figure~\ref{fig:Ux} for several different values of $\epsilon$.  It has a unique global minimum at $x_0 \in (0, \epsilon)$ with depth $U^{\epsilon}(x_0) = -D_e$, and hence, (A2) holds.  
The width of the basin of attraction of the minimum is $O(a^{-1})=O(\epsilon^2)$. Moreover $U^{\epsilon}(x)$ and all of its derivatives are exponentially small for $x \geq \epsilon$, which implies that (A1) holds.  Lastly, as $\epsilon \to 0$, $D_e \to \infty$ with parameters chosen as above, 
straightforward Laplace asymptotics  gives that
$
\lim_{\epsilon \to 0} \dfrac{\int_{0}^{\epsilon} e^{-U^{\epsilon}(x)}dx}{e^{-U^{\epsilon}(x_0)} \frac{\sqrt{2\pi}}{\sqrt{(U^{\epsilon})'' (x_0) }}} =\lim_{\epsilon \to 0} \dfrac{\int_{0}^\epsilon e^{-U^{\epsilon}(x)}dx}{e^{D_e} \frac{\sqrt{\pi}}{ a \sqrt{D_e }}} = 1  \,.
$ Thus, (A3) holds.
\end{example}

\begin{example}[Lennard-Jones($2m$,$m$) potential]
Fix $\kappa \ge 0$, let $\epsilon>0$, and consider the potential energy function defined by 
\[
U^{\epsilon}(x) = D_e\left( \left(\frac{1}{x-x_0 + 1}\right)^{2m}-2\left(\frac{1}{x-x_0 +1}\right)^{m}\right)
\]
with parameters $m=1/\epsilon^2$, $x_0=\epsilon^2$ and $D_e$ defined implicitly via $e^{D_e} \frac{\sqrt{\pi}}{ m\sqrt{D_e }}  = \kappa$.
 This potential has a unique global minimum at $x_0=\epsilon^2$ with depth $U^{\epsilon}(x_0) = -D_e$, and hence, (A2) holds.  The width of the basin of attraction of the minimum is $O(m^{-1})=O(\epsilon^2)$.  Moreover $U^{\epsilon}(x)$ and all of its derivatives are exponentially small for $x \geq \epsilon$, which implies that (A1) holds.  Similar to the preceding example, one can verify that \eqref{kappa} holds.   Hence, Assumption \ref{A123} is satisfied.
\end{example}

\subsection{Dynamics of $X^\epsilon_t$ for $\epsilon \ll 1$}

Consider the dynamics of $X^\epsilon_t$ in \eqref{1d:dX} when $\epsilon$ is small.  When $X^\epsilon_t$ is far enough away from the origin, it feels no force, and simply diffuses, like a Brownian motion.  When $X^\epsilon_t$ is within a distance of $\epsilon$ from the origin, it feels a strong force keeping it near the minimum of $U^\epsilon$ for a long time, until an occasional large fluctuation pushes it out of the range of the force. 

How long does it stay near the origin, and does this time remain significant as $\epsilon\to 0$? We start by computing the mean first-passage time (MPFT) to escape from a region near the origin. 

\begin{lemma}\label{lem:taueps}
Let $\tau^{\epsilon}(x) = \mathbb{E} \left( \inf\{ t \ge 0 \mid X^\epsilon_t > \ell, ~X_0 = x \in [0, \ell] \} \right)$ be the MFPT of $X_t^{\epsilon}$ out of $[0, \ell]$  for some $\ell>0$ with initial condition $x \in [0, \ell]$. Then
\begin{equation} \label{eq:MFPTlim}
\lim_{\epsilon \to 0} \tau^{\epsilon}(0) = \kappa \ell  + \frac{\ell^2}{2} \;.
\end{equation}
\end{lemma}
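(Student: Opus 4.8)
The plan is to convert $\tau^\epsilon$ into the solution of a deterministic boundary value problem, solve that problem explicitly, and then carry out the $\epsilon\to0$ asymptotics of the resulting double integral. Since $X^\epsilon_t$ solves \eqref{1d:dX} with a reflecting boundary at the origin, its generator acts as $\mathcal{L}^\epsilon f = -\partial_xU^\epsilon\,\partial_x f + \partial_{xx}f = e^{U^\epsilon}\,\partial_x\!\left(e^{-U^\epsilon}\,\partial_x f\right)$, and $\tau^\epsilon$ is the unique $C^2([0,\ell])$ solution of $\mathcal{L}^\epsilon\tau^\epsilon = -1$ on $(0,\ell)$ subject to the Neumann condition $\partial_x\tau^\epsilon(0)=0$ (reflection at $0$) and the Dirichlet condition $\tau^\epsilon(\ell)=0$ (absorption at $\ell$). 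First I would integrate this ODE twice: rewriting it as $\partial_x\!\left(e^{-U^\epsilon}\partial_x\tau^\epsilon\right) = -e^{-U^\epsilon}$ and integrating from $0$ using $\partial_x\tau^\epsilon(0)=0$ gives $\partial_x\tau^\epsilon(x) = -e^{U^\epsilon(x)}\int_0^x e^{-U^\epsilon(y)}\,dy$, and integrating once more from $x$ to $\ell$ using $\tau^\epsilon(\ell)=0$ yields
\[
\tau^\epsilon(0) \;=\; \int_0^\ell e^{U^\epsilon(z)}\left(\int_0^z e^{-U^\epsilon(y)}\,dy\right)\!dz .
\]

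Next I would split the outer integral at $z=\epsilon$ into a bulk piece $\int_\epsilon^\ell$ and a boundary-layer piece $\int_0^\epsilon$. On the bulk, (A1) gives $e^{\pm U^\epsilon(z)}=1+O(\epsilon)$ uniformly for $z\ge\epsilon$, while the inner integral decomposes as $\int_0^z e^{-U^\epsilon}=\int_0^\epsilon e^{-U^\epsilon}+\int_\epsilon^z e^{-U^\epsilon}$ and converges, for each fixed $z>0$, to $\kappa+z$ by (A3) and (A1). These same estimates bound the bulk integrand by an $\epsilon$-independent constant on $[\epsilon,\ell]$, so bounded convergence gives $\int_\epsilon^\ell e^{U^\epsilon(z)}\left(\int_0^z e^{-U^\epsilon(y)}\,dy\right)dz\to\int_0^\ell(\kappa+z)\,dz=\kappa\ell+\tfrac{\ell^2}{2}$, which is exactly the claimed limit; it remains only to show the boundary-layer piece vanishes.

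Showing $\int_0^\epsilon e^{U^\epsilon(z)}\left(\int_0^z e^{-U^\epsilon(y)}\,dy\right)dz\to0$ is the step I expect to be the main obstacle, since $e^{U^\epsilon(z)}$ is exponentially large near $z=0$ (the well being deep forces $U^\epsilon$ to be large and positive at the edge of the boundary layer). The idea is to use the monotonicity imposed by (A2): $U^\epsilon$ has no critical point other than the minimizer $x_0\in(0,c\epsilon)$, hence it is nonincreasing on $(0,x_0)$ and nondecreasing on $(x_0,\infty)$. For $z\in(0,x_0\wedge\epsilon]$, monotonicity gives $e^{-U^\epsilon(y)}\le e^{-U^\epsilon(z)}$ for $0\le y\le z$, so $e^{U^\epsilon(z)}\int_0^z e^{-U^\epsilon(y)}\,dy\le z$ and this part contributes at most $\int_0^{x_0}z\,dz=O(\epsilon^2)$. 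For $z\in[x_0\wedge\epsilon,\epsilon]$, monotonicity and (A1) give $e^{U^\epsilon(z)}\le e^{U^\epsilon(\epsilon)}=1+O(\epsilon)$, while $\int_0^z e^{-U^\epsilon(y)}\,dy\le\int_0^\epsilon e^{-U^\epsilon(y)}\,dy$ stays bounded by (A3); this part contributes $O(\epsilon)$. Adding the two pieces shows the boundary-layer term is $O(\epsilon)\to0$, which together with the bulk limit proves \eqref{eq:MFPTlim}. The remaining points — well-posedness and $C^2$ regularity of $\tau^\epsilon$ on the compact interval $[0,\ell]$ with $U^\epsilon\in C^2$, and the identification of reflection at $0$ with the Neumann condition — are standard and not where the difficulty lies.
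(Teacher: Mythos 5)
Your proposal is correct and follows essentially the same route as the paper: the same boundary value problem, the same twice-integrated formula for $\tau^\epsilon(0)$, a split of the double integral at $z=\epsilon$ that regroups the paper's three terms into your bulk and boundary-layer pieces, and the same use of the monotonicity from (A2) to kill the boundary-layer term (which the paper handles in a footnote). Your version of that last estimate, splitting at $x_0$ and bounding each side by monotonicity, is if anything slightly more explicit than the paper's, but it is the same idea.
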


When $\kappa =0$, we recover the MFPT of a reflecting Brownian motion starting at $0$. The time scales as the distance squared, $\tau^{\epsilon}(0) \sim O(\ell^2)$, a traditional diffusive scaling. When $\kappa \to \infty$, the MFPT is infinite,  consistent with the MFPT of an absorbing Brownian motion starting at $0$. 
For intermediate $\kappa$, the MFPT at the origin scales  as $\tau^{\epsilon}(0)\sim O(\ell)$, which is a \emph{ballistic} scaling -- slower (for small $\ell$) than the diffusive scaling. Therefore, we expect the limiting probability density near the origin to be correspondingly large. 

\begin{proof}
The MFPT $\tau^{\epsilon}(x)$ satisfies the boundary value problem 
\[
\partial_x ( e^{-U^{\epsilon} } \partial_x \tau^{\epsilon} ) = - e^{-U^{\epsilon} }  ~~\text{on $[0, \ell]$ with b.c. $\partial_x \tau^{\epsilon}(0) = 0$ and $\tau^{\epsilon}(\ell) = 0$.}
\]  
By integrating twice, the semi-analytic solution to this equation is given by 
\begin{align*}
\tau^{\epsilon}(x) = \int_x^\ell \int_0^r e^{ U^{\epsilon}(r) - U^{\epsilon}(s)} ds dr 
\end{align*} 
and at $x=0$ we obtain, 
\begin{align*}
\tau^{\epsilon}(0) &= \int_0^{\epsilon} e^{-U^{\epsilon}(s)} \int_s^{\epsilon} e^{U^{\epsilon}(r)} dr ds + 
\int_0^{\epsilon} e^{-U^{\epsilon}(s)} \int_{\epsilon}^{\ell} e^{U^{\epsilon}(r)} drds  \\
& \qquad + \int_{\epsilon}^{\ell} e^{-U^{\epsilon}(s)} \int_{s}^{\ell} e^{U^{\epsilon}(r)} dr ds \;.
\end{align*}
Assumptions~\ref{A123} (A1) and (A3) imply that the last two terms converge to the right-hand side of \eqref{eq:MFPTlim}, while Assumption~\ref{A123} (A3) 
implies the first term converges to 0.\footnote{
To see this last point, let $x_0$ be the minimizer of $U(s)$ on $[0,\epsilon]$, and let $u_* = \max_{s\in[x_0,\epsilon]} U(s)$ be the maximum of the energy to the right of the minimum. 
By Assumption~\ref{A123} (A2), there is at most one point $x_*\in[0,x_0)$ such that $U(x_*) = u_*$, and since the energy monotonically increases as $s$ decreases below $x_*$, we have $-(U(s)-U(r))\leq 0$ for $r\geq s$, $s\leq x_*$. 
For $s>x_*$, $r\geq s$, we have $-(U(s)-U(r))\leq -U(s) + u_*$. 
Therefore by Assumption~\ref{A123} (A3), 
\[
\left|\int_0^{\epsilon} e^{-U^{\epsilon}(s)} \int_s^{\epsilon} e^{U^{\epsilon}(r)} dr ds\right| \leq 
\int_0^{\epsilon}  \int_s^{\epsilon} e^{0\vee (-U(s)+u_*)} dr ds  \to 0 \quad\text{ as } \epsilon \to 0\,.
\]
} 
\end{proof}

\begin{figure}\centering
\includegraphics[width=0.6\textwidth]{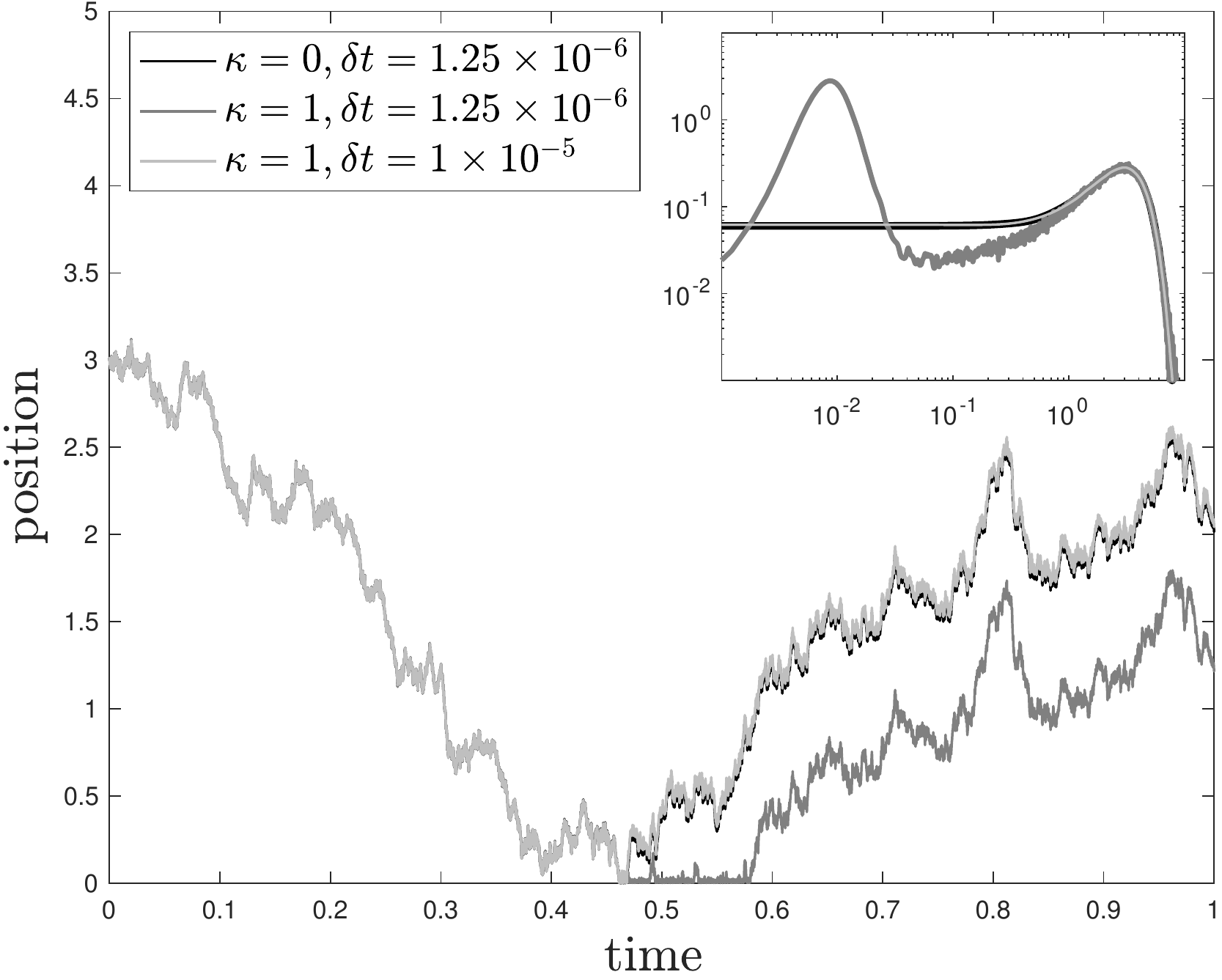} 
\caption{ Realizations of symmetrized Euler-Maruyama  \eqref{1d:symm_euler}  driven by the same realization of Brownian motion with initial condition $\tilde X_0=3$, and $U^{\epsilon}(x)$ defined as the Morse potential in Example~\ref{ex:morse} with sticky parameter $\kappa=1$, potential depth $D_e=5$, and potential range $a\approx118$.  The case $\delta t =1.25 \times 10^{-6}$ (in dark grey) corresponds to a converged numerical solution.  However, when $\delta t=10^{-5}$ (in light grey), the trajectory is indistinguishable from a pure reflecting Brownian motion (in black).  Corresponding empirical densities at $t=1$ are shown in the inset; note the empirical density for $\delta t=10^{-5}$ (in light grey) is on top of the empirical density of the pure reflecting Brownian motion (in black). This shows that the time step size requirement for qualitatively correct solutions of \eqref{1d:symm_euler} is stringent.  For a quantitative test, see Figure~\ref{fig:accuracy_sbm_1d}.  
}\label{fig:symm_euler_sbm_paths}
\end{figure}

\begin{figure}\centering
\includegraphics[width=0.5\textwidth]{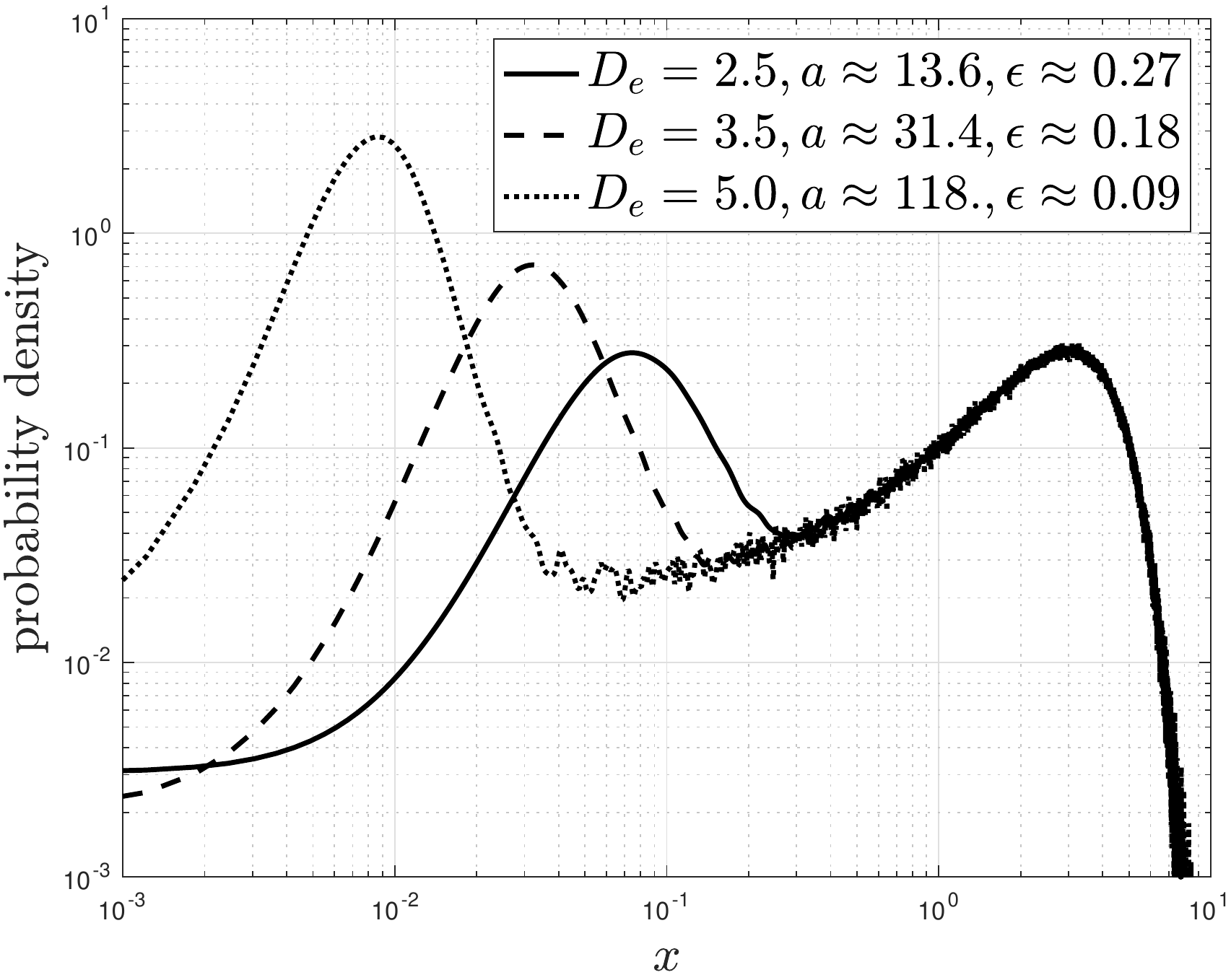} 
\caption{ A plot of the solution to \eqref{1d:FP} with a point mass initial condition at $x=3$ and $U^{\epsilon}$ defined as the Morse potential given in Example~\ref{ex:morse} with sticky parameter $\kappa=1$ and potential depths $D_e$ as indicated in the figure legend.  The horizontal axis is a log scale and shows the probability densities change rapidly near $0$, but are otherwise slowly varying.   The small kinks in the plot are due to Monte Carlo error.  }\label{fig:symm_euler_sbm_pdf}
\end{figure}

To gain further insight into the dynamics, consider the evolution of the probability density $p^{\epsilon}(x,t)$ of  $X^\epsilon_t$. 
It evolves according to the Fokker-Planck equation  
\begin{equation}\label{1d:FP}
\partial_t p^\epsilon(x,t) + \partial_x j^{\epsilon}(x,t) = 0 \,,
\end{equation}
where $j^{\epsilon}(x,t) = - \partial_x U^{\epsilon}(x) p^{\epsilon}(x,t) - \partial_{x} p^{\epsilon}(x,t)$ is the associated probability current, or probability flux. 
The equation is to be solved in the domain $x>0$, $t>0$ with boundary condition $j^{\epsilon}(0,t) = 0$ for all $t \ge 0$, and given initial condition $p^{\epsilon}(x,0)=\phi(x)$. 

For this one-dimensional problem we could compute $p^\epsilon(x,t)$ to high accuracy by solving the PDE \eqref{1d:FP} numerically. However, such an approach won't work in the high dimensions characteristic of systems of particles, so to illustrate the difficulties that may arise, we simulate $10^6$ trajectories of \eqref{1d:dX} numerically and estimate the density by kernel density estimation. 
We use a Morse potential for $U^\epsilon$ as in Example~\ref{ex:morse}, and a symmetrized Euler-Maruyama approximation to solve \eqref{1d:dX}, a method that is first-order weakly accurate method for domains with a smooth enough boundary \cite{Go2001, BoGoTa2004}. 
 The method works as follows: given a time-step size $\delta t$, we set $t_k=k\delta t $ for $k \in \mathbb{N}_{\ge 0}$, and compute the approximation $\tilde X_k$ to $X^\epsilon(t_k)$ as
 \begin{equation} \label{1d:symm_euler}
\tilde X_{k+1} = | \tilde X_k - \partial_xU^\epsilon(\tilde X_k) \delta t + \sqrt{2} (W(t_{k+1}) - W(t_k)) | \;, \quad \tilde X_0 = x \ge 0 \;.
\end{equation} 
See Appendix \ref{sec:listings} for Matlab code implementing this method. 
Unfortunately, when the Morse potential used is short-ranged and strong, an accurate approximation requires a small time step size: when $\kappa=1,a \approx 118$ the time step size required for a qualitatively correct solution is approximately $\delta t \le 2.5 \times 10^{-6}$ --  Figure~\ref{fig:symm_euler_sbm_paths} shows that larger timesteps fail to see the boundary layer entirely.


Figure~\ref{fig:symm_euler_sbm_pdf} plots $p^{\epsilon}(x,1)$  on a log scale for various values of $\epsilon$, and shows that as $\epsilon$ decreases,  $p^{\epsilon}(x,1)$ has an increasingly large peak in density near the origin. Although the width of the peak decreases, the total probability in the peak remains nearly constant with $\epsilon$. Therefore, we expect the probability density to contain a singularity of the form $\delta(x)$ when $\epsilon\to 0$. 

Furthermore, notice that $p^{\epsilon}(x,1)$ is slowly varying for $x \ge \epsilon$ and rapidly varying for $0 \le x \le \epsilon$.  This suggests using the method of matched asymptotic expansions to study \eqref{1d:FP} as $\epsilon \to 0$ \cite{Hinch1991, KevorkianCole1996}. This approach, which is very similar to boundary-layer theory in fluids, proceeds by finding a local `inner' solution near the origin and a local `outer' solution far enough away from the origin, and then matching these local solutions in an intermediate region.

\subsection{Asymptotics of the probability density in the sticky limit} 

We pursue the method of matched asymptotic expansions to show that the leading-order probability density of $X^\epsilon_t$ (leading-order in the sense of a measure) is 
\begin{equation}\label{1d:dens}
\rho(x,t) = p(x,t)(1+\kappa\delta(x))\,,
\end{equation}
where $p(x,t)$ solves
\begin{equation}\label{1d:FPlim}
\partial_t p = \partial_{xx} p\,,\qquad\text{with b.c. }\quad  \kappa \partial_{xx}p = \partial_x p \quad \text{ at } x=0\,,
\end{equation}
with a given initial condition $p(x,0) = \phi(x)$. 
We assume that $\phi$ is continuously differentiable  on $[0,\infty)$ and that $\phi$ and $\phi'$ are bounded on $[0,\infty)$; assumptions which guarantee that \eqref{1d:dens} is well-posed \cite[Theorem 1]{Peskir:2014}. 
Equations \eqref{1d:dens}, \eqref{1d:FPlim} are the main results of this section, and describe the evolution of probability of a SBM \cite{Feller:1952te}. 


To derive these equations we adopt a formal approach to highlight the main ideas, but expect that the argument could be turned into a more rigorous proof of weak convergence by the interested reader.

Let us make the ansatz that the solution to \eqref{1d:FP} away from the origin ($x \ge \epsilon$), or outer solution,  and near the origin ($x\le \epsilon$), or inner solution, have the asymptotic expansions, respectively,
\begin{align}\label{asymptotic_expansion}
p^\epsilon(x,t) \sim \begin{cases} p_0(x,t) + p_1(x,t) + \cdots, & x> \epsilon \,,\\
q_0 (x,t) + q_1(x,t) + \cdots, &  x \le \epsilon\,,
\end{cases}
\end{align}
where $p_0(x,t)$ and $q_0(x,t)$ are the leading order terms in the expansions, and $p_1\ll p_0$, $q_1\ll q_0$ as $\epsilon \to 0$.  
We do not assume any particular scaling for these terms, nor the leading-order solutions; it will turn out that $p_0\sim O(1)$ but $q_0\sim O(\epsilon^{-1})$, because of Assumption~\ref{A123} (A3). 

 By Assumption~\ref{A123} (A1), 
 the outer solution $p_0(x,t)$ satisfies the following linear PDE to leading order as $\epsilon\to 0$:
\begin{equation}\label{1d:p0}
\partial_t  p_0 =  \partial_{xx}  p_0\,,\qquad  x\ge \epsilon\,,
\end{equation}
with $p_0(x,0) = \phi(x)$. 
Near the origin, the density changes rapidly so it is convenient to change variables to $X = x/\epsilon$. Keeping the leading-order terms gives 
\begin{equation}\label{1d:q0eqn}
0 = \partial_X(\partial_XU^\epsilon( \epsilon X)q_0) +  \partial_{XX} q_0\,,\qquad X \le 1
\end{equation}
with the reflecting boundary condition $\partial_X U^{\epsilon}(0,t) q_0(0,t) + \partial_X q_0(0,t) = 0$. 
We have used  Assumption \ref{A123} (A2) and the timescale it implies to argue that $\partial_t q_0$ is lower order than the terms retained in \eqref{1d:q0eqn}.


Equation \eqref{1d:q0eqn} is the equation for the stationary density of a particle diffusing in a potential $U^\epsilon$ with a reflecting boundary condition at the origin.  
The solution is 
\begin{equation}\label{1d:q0}
q_0(\epsilon X,t) = a(t)e^{-U^\epsilon(\epsilon X)}\,, \qquad X \le O(1) 
\end{equation}
where $a(t)$ is some unknown function of time, to be determined by matching to the outer solution.  

Now we match the outer solution $p_0(x,t)$ and the inner solution $q_0(\epsilon X,t)$ at $x=\epsilon$, $X=1$. Unlike the traditional method of matched asymptotic expansions, we do not match in an overlap region, but rather at a single point, which is possible because the perturbation is not singular (the diffusion terms do not change with $\epsilon$.) 
We match using two conditions: one, probability is continuous, and two, probability is conserved. 
The first condition requires that 
\[
p_0(\epsilon,t) = q_0(\epsilon,t) 
\quad\implies\quad
a(t) = p_0(0,t)
\]
to leading order in $\epsilon$,
where we used that $e^{-U^{\epsilon}(\epsilon)} \sim 1$, by Assumption~\ref{A123} (A1), and $p_0(\epsilon,t) \sim p_0(0,t)$.
The condition that probability be conserved requires that 
\[
\dd{}{t}\left( \int_{0}^{\epsilon} q_0(x,t)\: dx + \int_\epsilon^\infty p_0(x,t) \: dx\right) = 0 
\quad \implies \quad
\kappa a'(t) - \partial_x p_0(0,t) = 0\,,
\]
to leading order in $\epsilon$. 
We moved the time derivative into the integrals, 
substituted for $\partial_t p_0$ using \eqref{1d:p0}, and substituted $\kappa$ using  Assumption~\ref{A123} (A3).

Putting these results together gives a boundary condition for the outer solution $p_0$ at the origin, which can be written in two ways: 
\begin{equation}\label{1d:bdy}
\kappa \partial_t p_0|_{x=0} = \partial_xp_0|_{x=0}
\quad\iff\quad
\kappa\partial_{xx}p_0|_{x=0} = \partial_xp_0|_{x=0}\,.
\end{equation}
Combining with \eqref{1d:p0} and removing the subscript on $p$ gives \eqref{1d:FPlim}. 

To obtain \eqref{1d:dens}, notice that by continuity of probability, the leading-order density is $p_0(x,t)e^{-U^\epsilon(x)}$. Assumption \ref{A123} implies that $e^{-U^\epsilon(x)}$ converges weakly to $1+\kappa\delta(x)$ as $\epsilon\to 0$, giving a limiting density of \eqref{1d:dens}.

\subsection{Fokker-Planck equation}

We make a few remarks concerning the limiting dynamics \eqref{1d:FPlim} and their relationship to a sticky Fokker-Planck equation. 

Notice that, as expected, the limiting probability density \eqref{1d:dens} is singular with respect to Lebesgue measure on $\mathbb{R}^+$. Its structure shows that an SBM can spend finite time on any interval in its domain, as well as at the origin, $\{0\}$, sets with intrinsically different dimensions. 
This is an unusual property for diffusion processes; it does not occur for 
 processes with the more typically-studied Dirichlet, Neumann, or Robin boundary conditions. 

It turns out that the measure $1+\kappa\delta(x)$ in \eqref{1d:dens} is infinitesimally invariant,
and is proportional to the invariant measure if the SBM is confined to a compact space, as the following lemma shows. 

\begin{lemma}
Suppose the function $p$ in \eqref{1d:dens}, \eqref{1d:FPlim} additionally satisfies a reflecting boundary condition $p_x = 0$ at $x=L>0$. Then the unique invariant probability measure $\pi$ is  
\begin{equation}\label{1d:pi}
\pi(x) = Z^{-1}( 1+\kappa\delta(x))\,, \qquad \text{where} \quad Z = \kappa + L\,.
\end{equation}
\end{lemma}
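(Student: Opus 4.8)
The plan is to verify directly that the measure $\pi$ defined in \eqref{1d:pi} is invariant for the dynamics \eqref{1d:FPlim} with the added reflecting condition at $x=L$, and then to argue uniqueness. First I would make precise what ``invariant'' means here: a (probability) measure $\mu$ on $[0,L]$ is invariant if, for $p$ solving \eqref{1d:FPlim} with $p_x=0$ at $x=L$, the pairing $\int_{[0,L]} p(x,t)\,\mu(dx)$ is constant in $t$. Equivalently, writing the generator of the SBM as $\mathcal{L}$ acting on test functions $f$ — which on $(0,L)$ is $\mathcal{L}f = f''$ with the domain constraint $\kappa f''(0) = f'(0)$ at the sticky endpoint and $f'(L)=0$ at the reflecting endpoint — invariance of $\mu$ is the statement $\int \mathcal{L}f\,d\mu = 0$ for all $f$ in the domain. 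So the core computation is: with $\mu = 1\cdot dx + \kappa\,\delta_0$ (the unnormalized version), show $\int_0^L f''(x)\,dx + \kappa f''(0) = 0$ for every $f$ with $f'(L)=0$ and $\kappa f''(0)=f'(0)$. Indeed $\int_0^L f'' \,dx = f'(L) - f'(0) = -f'(0) = -\kappa f''(0)$, which cancels the $\delta_0$ contribution $\kappa f''(0)$. This is the crux, and it is short; the boundary condition at $0$ is exactly what is needed to make the two terms cancel.

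Next I would record the normalization: $\mu\big([0,L]\big) = \int_0^L dx + \kappa = L + \kappa =: Z$, so $\pi = Z^{-1}\mu$ is the unique scaling of $\mu$ that is a probability measure, giving the stated formula for $Z$. For completeness I would also sanity-check invariance at the level of the forward equation rather than the generator: the density part of $\pi$ is the constant $Z^{-1}$, which trivially solves $\partial_t p = \partial_{xx} p$ (both sides zero) and satisfies $p_x(L)=0$; the sticky boundary condition $\kappa\partial_{xx}p|_{x=0} = \partial_x p|_{x=0}$ also holds since both sides vanish; and the mass $\kappa$ carried by the atom at $0$ is constant because the probability flux $j = -\partial_x p$ vanishes identically for the constant density, so no probability leaks between the atom and the interior. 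This is consistent with the interpretation already given in the paper that $1+\kappa\delta(x)$ is infinitesimally invariant.

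For uniqueness I would invoke ergodicity of the SBM on the compact interval $[0,L]$ with reflection at $L$ and stickiness at $0$: the process is irreducible (it reaches every open subset of $[0,L]$ and the point $0$ with positive probability from any start) and positive recurrent (Lemma~\ref{lem:taueps} and its analogue show finite mean return times), hence it has a unique stationary probability measure. Alternatively, and perhaps cleaner for this applied audience, one can argue uniqueness at the PDE level: if $\pi'$ is another invariant probability measure, decompose it as $\pi' = g(x)\,dx + c\,\delta_0$; invariance forces $g$ to be a stationary solution of the heat equation with $g'(L)=0$ and the sticky condition at $0$, which yields $g$ constant, and then the flux-balance at $0$ together with the total-mass constraint pins down $c$ and the constant uniquely, reproducing \eqref{1d:pi}. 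The main obstacle I anticipate is not the verification — that is a two-line integration by parts — but stating the uniqueness argument at the right level of rigor: one must be careful about the correct domain of the generator (the nonlocal Wentzell-type condition $\kappa f''(0) = f'(0)$ coupling the atom to the interior) and about why the stationary measure cannot have a singular part other than an atom at $0$; I would handle this by citing the well-posedness/semigroup results already referenced in the paper (e.g.\ \cite{Peskir:2014}) and the general theory of Feller boundary conditions, rather than reconstructing it from scratch.
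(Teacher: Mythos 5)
Your proposal is correct, but your primary route differs from the one the paper takes. The paper's proof is two lines: solve the stationary version of \eqref{1d:FPlim} with $p_x(L)=0$ and the sticky condition at $0$ to get $p(x,t)=c$ constant (the general stationary solution $ax+b$ is forced to have $a=0$ by either boundary condition), then substitute into \eqref{1d:dens} and normalize. Your main computation instead verifies infinitesimal invariance through the adjoint pairing $\int_0^L f''\,dx + \kappa f''(0) = f'(L)-f'(0)+\kappa f''(0) = 0$, using the domain conditions $f'(L)=0$ and $\kappa f''(0)=f'(0)$; this is exactly the check the paper defers to the end of its Generator subsection as ``another way to verify'' invariance, and it is a clean dual of the forward-equation argument. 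You also include the paper's forward-equation route as a sanity check, so nothing is missing. Where you genuinely add value is on uniqueness: the paper's proof only shows that the stationary solution is unique \emph{within the ansatz} $\rho = p(x)(1+\kappa\delta(x))$, and is silent on why an invariant measure could not have, say, an atom at $0$ of weight other than $\kappa$ times the interior density, or some other singular part. Your decomposition $\pi' = g\,dx + c\,\delta_0$ with the flux-balance condition at $0$ pinning down $c$, or alternatively the irreducibility/positive-recurrence argument, closes that gap at the appropriate level of rigor for this setting. The trade-off is brevity: the paper's argument is shorter and sufficient for the formal derivation being presented, while yours is the one you would want if the lemma's claim of uniqueness were to be taken at face value.
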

This is also the measure obtained as the weak limit of the stationary densities for \eqref{1d:FP}. 
\begin{proof}
Solve for the steady-state solution of \eqref{1d:FPlim} to obtain $p(x,t) = c$ for some constant $c\in \mathbb R$, and use \eqref{1d:dens} to obtain \eqref{1d:pi}.
\end{proof}

It is sometimes helpful to work with a formulation of \eqref{1d:FPlim} directly in terms of the probability density. 
This is possible by first writing the density $\rho$ as a sum of densities on the  different manifolds in its support, as 
\begin{equation}\label{1d:densdecomposition}
\rho(x,t) = \rho_0(t)\mu_0 + \rho_1(x,t)\mu_1\,,
\end{equation}
where $\mu_0(dx)$ is the delta-function measure on $\{0\}$, and $\mu_1(dx)$ is the Lebesgue measure on $(0,\infty)$. We must impose a `continuity condition'
\begin{equation}\label{1d:cty}
    \rho_0(t) = \kappa\rho_1(0,t)
\end{equation}
to be consistent with the asymptotic derivation. 
Calculating $\partial_t\rho$ using \eqref{1d:FPlim} and \eqref{1d:dens} gives the system of equations
\begin{equation}\label{1d:FPfull}
\begin{array}{rll}
  \partial_t \rho_0  = & \partial_x\rho_1 & \text{for }x=0\,, \\
    \partial_t \rho_1 = & \partial_{xx}\rho_1 & \text{for } x\in (0,\infty)\,.
\end{array}
\end{equation}
System \eqref{1d:FPfull} and the continuity condition \eqref{1d:cty} can be interpreted as the Fokker-Planck equation for the evolution of the probability density $\rho$. If we write the system formally as $\partial_t \rho = \mathcal L^* \rho$ for some linear operator $\mathcal L^*$, then \eqref{1d:FPfull} shows that $\mathcal L^*$ should, formally at least, be interpreted as being a different partial differential operator, for the densities on each different manifold in the support of the process. 
Indeed, it turns out that for higher-dimensional sticky diffusions one can  impose different dynamics in the interior of a domain and on the boundary, and these dynamics don't have to bear any relation to each other \cite{IkWa1989}. 


The system \eqref{1d:FPfull} gives a more physical interpretation of the boundary condition in \eqref{1d:FPlim}. It shows the boundary condition simply balances fluxes: the rate of change of the probability at the origin, $\partial_t \rho_t|_{x=0} = \partial_t (\kappa p)|_{x=0}$, equals the flux of probability which leaves the open interval $(0,\infty)$ on the left, $\partial_x\rho_1|_{x=0} = \partial_x p|_{x=0}$. 
Here again we see how the sticky boundary condition interpolates between a reflecting condition, when $\kappa=0$ and the condition is $\partial_x p|_{x=0}$, and an absorbing one, when $\kappa\to\infty$, and the condition approaches $\partial_t p|_{x=0} = 0$.

\medskip

\subsection{Generator}

We now consider how to obtain the generator $\mathcal L$ of a SBM, starting from the Fokker-Planck equation \eqref{1d:FPfull}.
The generator forms the basis for our numerical method, and is in fact the more fundamental quantity describing a Markov process. Recall that the generator is the operator $\mathcal L$ which is the formal adjoint of $\mathcal L^*$, i.e. $\langle f,\mathcal L^*\rho\rangle = \langle \mathcal L f, \rho\rangle$, for all densities $\rho$ of the form \eqref{1d:dens}, and all test functions $f\in C^2_c([0,\infty))$ that satisfy an appropriate boundary condition at zero (to be determined.) 
Because properly defining $\mathcal L^*$ is somewhat subtle due to the singularities at the origin, 
we find it more transparent to work with a weakly equivalent formulation, which asks that $\mathcal L$ satisfy
\begin{equation}\label{1d:weakinner}
\langle f,\partial_t\rho\rangle = \langle \mathcal L f, \rho\rangle\,.
\end{equation}
To proceed, compute the left-hand side of \eqref{1d:weakinner} as 
\begin{align*}
\langle f,\partial_t\rho\rangle &= (f\kappa\partial_t p)|_{x=0} + \int_0^\infty f\partial_tp \: dx \\
&=  (f\kappa\partial_{xx} p)|_{x=0} - (f\partial_x p)|_{x=0} +(p\partial_xf)|_{x=0}  + \int_0^\infty p \partial_{xx}f \: dx\\
&=  (p\partial_xf)|_{x=0} - (\kappa p\partial_{xx}f)|_{x=0} + \int \rho\partial_{xx}f\:dx\,.
\end{align*}
In the first step we substituted $\partial_{xx}p$ for  $\partial_t p$ and integrated by parts, assuming a decay condition at $x=\infty$, and in the second step we used the boundary condition on $p$ at $x=0$ and rewrote the integral in terms of $\rho$. 

We see that \eqref{1d:weakinner} is satisfied if we choose the generator and its associated boundary conditions to be
\begin{equation}\label{1d:generator}
\mathcal L f = \partial_{xx} f, \qquad \text{with b.c. } \quad \kappa\partial_{xx}f = \partial_x f \quad \text{ at } x=0\,.
\end{equation}
For a SBM, the generator and its formal adjoint are equal. 

Notice that if in \eqref{1d:weakinner} we had replaced $\rho$ with $p$, the function solving \eqref{1d:FPlim}, we would not have found appropriate boundary conditions for $f$. This shows the importance of interpreting the weak formulation of \eqref{1d:FPlim} in the correct function space.

From the generator we obtain another way to verify that \eqref{1d:pi} is an infinitesimally invariant measure, by showing $\langle \mathcal L f, \pi\rangle =0$ for all  test functions $f$ satisfying the appropriate boundary conditions. Calculations very similar to the above show this equation holds. 

\subsubsection{Example: MFPT of a sticky Brownian motion} 
As a simple application of \eqref{1d:generator}, we directly calculate the MFPT of an SBM.
\begin{lemma} \label{l:mfpt_sbm}
The MFPT of an SBM out of $[0,\ell]$ with initial condition $x \in [0,\ell]$ is $\tau(x) =  - \kappa x  - (1/2) x^2 +  \kappa \ell  + (1/2) \ell^2$.  In particular, $\tau(0) = \kappa \ell + (1/2) \ell^2$.
\end{lemma}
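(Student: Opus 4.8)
The plan is to characterize the MFPT as the solution of a boundary value problem built from the generator \eqref{1d:generator}, and then solve that problem explicitly. By Dynkin's formula, if $T = \inf\{t\ge 0 : X_t > \ell\}$ denotes the exit time of the SBM from $[0,\ell]$, then $\tau(x) = \mathbb{E}_x[T]$ satisfies $\mathcal{L}\tau = -1$ on the interior $(0,\ell)$, carries the absorbing condition $\tau(\ell)=0$ at the exit point, and at the sticky endpoint $x=0$ inherits the boundary condition from the domain of $\mathcal{L}$. Using \eqref{1d:generator} this reads
\begin{equation*}
\partial_{xx}\tau = -1 \ \text{ on } (0,\ell)\,, \qquad \tau(\ell)=0\,, \qquad \kappa\,\partial_{xx}\tau|_{x=0} = \partial_x\tau|_{x=0}\,.
\end{equation*}

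Next I would integrate the interior equation twice to get $\tau(x) = -\tfrac12 x^2 + Ax + B$ for constants $A,B$. Since $\partial_{xx}\tau \equiv -1$, the sticky condition at $x=0$ becomes $-\kappa = \partial_x\tau|_{x=0} = A$, fixing $A = -\kappa$; the absorbing condition $\tau(\ell)=0$ then gives $B = \tfrac12\ell^2 + \kappa\ell$. This yields $\tau(x) = -\kappa x - \tfrac12 x^2 + \kappa\ell + \tfrac12\ell^2$, and setting $x=0$ gives $\tau(0)=\kappa\ell + \tfrac12\ell^2$, consistent with Lemma~\ref{lem:taueps}. Equivalently, one could simply verify a posteriori that this quadratic satisfies all three conditions, which bypasses any appeal to an existence/uniqueness theory.

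The only real subtlety is justifying that $\tau$ genuinely solves this BVP — i.e.\ that the boundary condition in the domain of $\mathcal{L}$ is the correct one to impose at $x=0$ for an exit-time problem, and that $\tau$ is finite and regular enough for Dynkin's formula to apply. Finiteness is clear for $\kappa<\infty$ (and the formula blows up as $\kappa\to\infty$, matching the absorbing limit noted after Lemma~\ref{lem:taueps}), and the regularity together with the correctness of the boundary condition follow from the construction of the generator in the preceding subsection, so at this point it is a routine check rather than a genuine obstacle. An alternative, fully self-contained route is to pass to the $\epsilon\to 0$ limit inside the explicit formula for $\tau^\epsilon(x)$ obtained in the proof of Lemma~\ref{lem:taueps}, which recovers the same expression without invoking $\mathcal{L}$ at all.
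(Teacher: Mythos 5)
Your proposal is correct and follows essentially the same route as the paper: both set up the boundary value problem $\partial_{xx}\tau=-1$ on $[0,\ell]$ with the sticky condition $\kappa\,\partial_{xx}\tau(0)=\partial_x\tau(0)$ and $\tau(\ell)=0$, then solve the resulting quadratic for the constants. Your extra remarks on justifying Dynkin's formula and the alternative $\epsilon\to 0$ limit are sensible additions but do not change the argument.
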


\begin{proof}
The MFPT of an SBM satisfies the boundary value problem  $\mathcal L \tau(x) = -1$, $\tau(l) = 0$, plus any other boundary conditions associated with $\mathcal L$. Specifically, 
\[
\partial_{xx} \tau(x) = -1 ~~ \text{on $[0, \ell]$ with b.c. $\kappa \partial_{xx} \tau(0) = \partial_x \tau(0) $ and $\tau(\ell) =0$}\,.
\] 
The solution is $\tau(x)=-(1/2) x^2 + c_1 x + c_2$ where $c_1$ and $c_2$ are constants determined by imposing the boundary conditions.
\end{proof}

Note that $\tau(0)=\lim_{\epsilon\to 0}\tau^\eps(0)$, the limit of the MFPT for the reflecting diffusions, as in Lemma \ref{lem:taueps}. 

\medskip

\subsubsection{Example: transition rates between sticky points} 
As another application of \eqref{1d:generator}, we consider a Brownian motion on a line segment with sticky endpoints, and calculate the transition rates between the endpoints. 
Such a setup is a good model for the transition paths between clusters of mesoscale particles, and  quantitatively predicts their transition rates 
\cite{Perry:2015ku}. 
Suppose the endpoints have sticky parameters $\kappa_1,\kappa_2$ and the line has length $L$, as shown below. 
\begin{center}
\begin{tikzpicture}[scale=3]
\draw (0,0) -- (1,0) ;
\node at (0,0.13) {$\kappa_1$};
\node at (1,0.13) {$\kappa_2$};
\node[circle,draw, fill=black!50, inner sep=0pt, minimum width=7pt,label=below:{$L$}] at (1,0) {};
\node[circle,draw, fill=black!50, inner sep=0pt, minimum width=7pt,label=below:{$0$}] at (0,0) {};
\end{tikzpicture}
\end{center}
The generator for this problem is
\begin{equation}\label{stickyline}
\mathcal L_L f = \partial_{xx} f \quad \text{on } [0,L], 
\qquad \text{with b.c. }  \begin{cases} \kappa_1\partial_{xx}f = \partial_x f &\text{ at } x=0\\
\kappa_2\partial_{xx}f = -\partial_x f & \text{ at } x=L
\end{cases}
\end{equation}
The sign of $\partial_x f$ is reversed for the boundary condition at $L$, because the probability flux is in the opposite direction from the flux at $0$. 

A framework for calculating transition rates between disjoint sets $A$, $B$ exactly is given by Transition Path Theory \cite{E:2010hs}. 
These transition rates can be determined from empirical averages by using the following limit relations
\[
k_{A\to B} = \lim_{T\to \infty}\frac{N_T}{T_A}, \qquad 
k_{A\to B} = \lim_{T\to \infty}\frac{N_T}{T_B}\,,
\]
where $T$ is the time of observation, $N_T$ is the total number of transitions observed from $A$ to $B$ in time $T$, and $T_A, T_B$ are the total times during which the process last hit $A,B$ respectively; they satisfy $T_A+T_B=T$. 
We apply Transition Path Theory to the sets $A=\{0\}$ and $B=\{L\}$ to compute the quantities above, and refer the reader to \cite{E:2010hs} for more justification of these calculations. 

\begin{lemma} 
Given a process with generator $\mathcal L_L$ defined in \eqref{stickyline}, and let $A=\{0\}$, $B=\{L\}$. Then the stationary distribution is $\pi(x) = \frac{1}{L+\kappa_1+\kappa_2}(1 + \kappa_1\delta(x) + \kappa_2\delta(x-L))$, and the transition rates are
\[
k_{A\to B} = \frac{1}{\kappa_1 L + L^2/2}\,,\qquad 
k_{B\to A} = \frac{1}{\kappa_2 L + L^2/2}\,.
\]
\end{lemma}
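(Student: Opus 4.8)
The plan is to compute the three quantities --- the stationary distribution $\pi$, and then the two transition rates --- by solving boundary value problems built from the generator $\mathcal{L}_L$ in \eqref{stickyline}, exactly as in the two worked examples that precede the statement. For the stationary distribution, I would find the steady state of the associated Fokker-Planck system (the two-sided analogue of \eqref{1d:FPfull}, \eqref{1d:cty}): the density on $(0,L)$ must be constant, say $\rho_1 \equiv c$, and the continuity conditions $\rho_0(t) = \kappa_1\rho_1(0)$ at $x=0$ and $\rho_L(t) = \kappa_2\rho_1(L)$ at $x=L$ force the point masses to have weights $\kappa_1 c$ and $\kappa_2 c$. Normalizing $\int d\pi = 1$ gives $c(L + \kappa_1 + \kappa_2) = 1$, hence the stated $\pi$.

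For the transition rates I would follow Transition Path Theory as in \cite{E:2010hs}: introduce the committor function $Q(x)$ --- the probability of hitting $B=\{L\}$ before $A=\{0\}$ --- which satisfies $\mathcal{L}_L Q = 0$ on $(0,L)$ with the appropriate sticky boundary conditions degenerating to $Q(0) = 0$, $Q(L) = 1$ (since the boundary points are the target sets themselves, the Dirichlet data override the flux conditions there). Solving $\partial_{xx}Q = 0$ gives $Q(x) = x/L$. The TPT rate formula expresses $k_{A\to B}$ as the probability flux of reactive trajectories, $k_{A\to B} = \big(\int (\partial_x Q)^2 \, d\pi\big) / (\text{normalization})$ --- more precisely, with the natural inner product weighted by $\pi$ one gets $k_{A\to B} = Z^{-1}\int_0^L (Q'(x))^2\,dx$ where $Z = L + \kappa_1 + \kappa_2$, giving $k_{A\to B} = Z^{-1} \cdot L \cdot (1/L)^2 = 1/(Z\cdot L) \cdot$ --- but this does not match, so the cleaner route is to use the limit relations stated just above the lemma: $k_{A\to B} = \lim_{T\to\infty} N_T/T_A$, where $T_A$ is the long-run fraction of time the process was last in $A$, which equals $\pi(\{0\} \cup \text{its basin})$-type quantity, and $N_T/T$ is the reactive flux.

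Concretely, I would instead compute $k_{A\to B}$ via the mean first passage time: by Lemma \ref{l:mfpt_sbm} (suitably generalized), the MFPT from the sticky point $\{0\}$ to reach $\{L\}$ is $\tau(0) = \kappa_1 L + L^2/2$, since the boundary condition at $0$ is exactly the sticky one with parameter $\kappa_1$ and the condition at $L$ is absorbing. In the two-state reduction implied by TPT, the rate out of $A$ is the reciprocal of this MFPT, $k_{A\to B} = 1/(\kappa_1 L + L^2/2)$, and symmetrically $k_{B\to A} = 1/(\kappa_2 L + L^2/2)$ by interchanging the roles of the endpoints. I would then reconcile this with the flux formula and the stationary weights to confirm consistency (checking e.g. that $k_{A\to B}\,\pi(A)_{\text{eff}}$-type detailed-balance-like relations hold).

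The main obstacle I anticipate is pinning down precisely which TPT formula applies and getting the normalization constants right, because the sticky boundary conditions are non-standard: one must be careful about whether the "last hit $A$" time $T_A$ should weight the point mass $\kappa_1\delta(x)$, the basin of $A$, or both, and whether the committor's boundary conditions at $x=0,L$ are the Dirichlet values or the sticky flux conditions. I would resolve this by cross-checking the MFPT computation (which is unambiguous) against the TPT flux integral, and by verifying the $\kappa_1,\kappa_2 \to 0$ limit recovers the known transition rates of reflected Brownian motion on $[0,L]$ with the endpoints collapsed, namely $k_{A\to B} = 2/L^2$. The remaining steps --- solving the constant-coefficient ODEs and normalizing --- are routine.
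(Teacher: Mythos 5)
Your derivation of $\pi$ matches the paper in substance (the paper verifies $\langle \mathcal L_L f,\pi\rangle=0$ directly; you obtain the same measure from the steady state of the two-sided Fokker--Planck system, as in the earlier invariant-measure lemma --- both are fine). For the rates, however, you end up on a genuinely different route from the paper. The paper stays entirely within the Transition Path Theory bookkeeping: it computes the committor $q(x)=x/L$, the total transition flux $\nu=\pi\,\partial_x q = L^{-1}(L+\kappa_1+\kappa_2)^{-1}$, the reactive probabilities $\rho_A=\int_0^L\pi(1-q)\,dx=(\kappa_1+L/2)/(L+\kappa_1+\kappa_2)$ and $\rho_B$, and then forms $k_{A\to B}=\nu/\rho_A$. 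Your abandoned formula $Z^{-1}\int_0^L(Q')^2dx$ did not match precisely because it computes $\nu=\lim N_T/T$, not $k_{A\to B}=\lim N_T/T_A$; you were missing the division by $\rho_A$. Your replacement argument --- $k_{A\to B}$ is the reciprocal of the MFPT from $0$ to $L$ with the sticky condition at $0$, which by the same boundary value problem as Lemma~\ref{l:mfpt_sbm} equals $\kappa_1 L+L^2/2$ --- is correct and arguably more elementary here, but it relies on the fact that $A$ and $B$ are singletons: each ``$A$-labelled'' stretch of the trajectory begins exactly at $\{0\}$ and ends exactly when $\{L\}$ is first hit, so $T_A/N_T$ is a mean first-passage time by the renewal/ergodic theorem. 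You should state that reduction explicitly rather than appeal vaguely to a ``two-state reduction implied by TPT''; for general sets $A,B$ it fails (one must average the MFPT over the last-exit distribution), which is precisely why the paper uses the $\nu/\rho_A$ formulation. With that justification added, and the stray incorrect flux formula removed, your argument is a valid alternative proof, and your consistency checks ($\kappa_i\to 0$ giving $2/L^2$, agreement with $\nu/\rho_A$) are sound.
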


It is worth noting that the transition rates depend inversely on $L$, the total distance that the process must diffuse. This is a contrast to theories of transition rates derived from the Arrhenius formula or Transition State Theory, for which the transition rates depend only on local properties of critical points, such as the energy difference between critical points and their curvatures \cite{gardiner}. In the problem above, the energy differences are contained solely in $\kappa_1,\kappa_2$. 

\begin{proof}
One can verify that  $\langle \mathcal L_L f, \pi\rangle = 0$ for all twice-differentiable functions $f$ satisfying the boundary conditions, so $\pi$ given above is the stationary distribution. Now we turn to calculating the transition rates. We first calculate the committor function $q(x)$, a function that gives the probability of hitting $B$ first before $A$ starting from $x$. It solves the boundary value problem
\[
\mathcal L_L q = 0, \qquad q(A) = 0, \quad q(B) = 1\,,
\]
whose solution is  $q(x) = x/L$. 
Next we calculate the overall rate of transition $\nu = \lim_{T\to \infty} \frac{N_T}{T}$, as $\nu = \pi(x)\partial_x q(x)$ for any $x\in (0,L)$, giving $\nu = L^{-1}(L+\kappa_1+\kappa_2)^{-1}$. 
Then we calculate the ``reactive probabilities'' $\rho_A = \lim_{T\to\infty} \frac{T_A}{T}$, $\rho_B=\lim_{T\to\infty} \frac{T_B}{T}$. These are computed as $\rho_A = \int_0^L \pi(x)(1-q(x))dx$, $\rho_B = \int_0^L \pi(x)q(x)dx$, giving 
$\rho_A = (\kappa_1 + L/2) / (L + \kappa_1 + \kappa_2)$, $\rho_B = (\kappa_2 + L/2) / (L + \kappa_1 + \kappa_2)$. 
Finally, the reaction rates are computed as $k_{A\to B} = \nu / \rho_A$, $k_{B\to A} = \nu / \rho_B$, giving the result above. 
\end{proof}

\section{Numerical method to simulate a sticky Brownian motion}\label{sec:1dnumerics}

It is not obvious from either the Fokker-Planck equation or the generator how one should simulate trajectories of a sticky Brownian motion. 
One option is to return to the derivation as a limit as $\epsilon \to 0$ of $X^\epsilon_t$, and choose a small $\epsilon$ and simulate \eqref{1d:dX}. However, our earlier calculations (Figure \ref{fig:symm_euler_sbm_paths}) showed that for small $\epsilon$ one must use a very small timestep;  for large $\epsilon$ the solution will be inaccurate. 

Therefore, we turn to an entirely different method, based on constructing a continuous-time Markov chain whose generator $Q$ approximates the generator $\mathcal L$ of an SBM. Specifically, we spatially discretize the infinitesimal generator and boundary conditions of the SBM, for example using a finite difference approximation, to obtain the generator of a Markov jump process on the set of discretization points.  This Markov jump process may be simulated using a simple Monte Carlo method known variously as the Stochastic Simulation Algorithm, kinetic Monte Carlo, the Doob-Gillespie algorithm or the Gillespie algorithm \cite{Do1942,Do1945,Gi1976,Gi1977}. 

These approximations of sticky diffusions go back to the Markov Chain Approximation Method (MCAM) invented by Harold Kushner in the 1970s to approximate optimally controlled diffusion processes \cite{Ku1970, Ku1973, Ku1975, Ku1976A, Ku1976B, Ku1977, Ku2001, KuDu2001, Ku2002,Ku2004, Ku2006, Ku2011}. 
However, because of their interest in stochastic control problems, these works mainly focus on numerical solutions with gridded state spaces and  use numerical linear algebra to construct an approximation.   In the physics literature, a Monte-Carlo method was developed to construct the approximation \cite{elston1996numerical, WaPeEl2003,Ph2008,MeScVa2009,LaMeHaSc2011}, an approach which seems to go back to at least \cite{elston1996numerical}.  
More recently, a new ``gridless'' framework was introduced for constructing Markov jump process approximations for diffusions \cite{BoVa2018}, which allows the domain of the diffusion process to be unbounded, does not require that the diffusion process is symmetric, and does not assume that the infinitesimal covariance matrix of the diffusion process is diagonally dominant.  This generalization allows the jump size of the numerical solution to be uniformly bounded, which makes it easier to numerically treat boundary conditions.  Very recently, these ideas have been extended to SPDE problems \cite{bou2018spectrwm}. 

\subsection{Numerical Algorithm} 

To apply this approach, let us discretize the interval $[0,\infty)$ into a set of grid points $\mathcal R^h = \{0,h,2h,\ldots\}$, where $h$ is the spacing between neighboring grid points. 
Let $f\in C^4_b(\mathbb{R})$ and let $f_0,f_1,\ldots$ be the values of the function at the grid points, i.e. $f_k = f(kh)$ as illustrated below.
 \begin{center}
 \begin{tikzpicture}[scale=1.5]
\draw[-, thick](2,0.0) -- (6,0.0);
\draw[-, thick](3,-0.2) -- (3,0.2);
\node[black,scale=1.] at (2,-0.35) {$-h$};
\node[black,scale=1.] at (3,-0.35) {$0$};
\node[black,scale=1.] at (4,-0.35) {$h$};
\node[black,scale=1.] at (5,-0.35) {$2 h$};
\node[black,scale=1.] at (2,0.35) {$f_{-1}$};
\node[black,scale=1.] at (3,0.35) {$f_0$};
\node[black,scale=1.] at (4,0.35) {$f_1$};
\node[black,scale=1.] at (5,0.35) {$f_2$};
\node[black, scale=1.5,fill=white] at (6.0,0.0) {$\dotsc$};

\foreach \x in {2.0,3.,4.,...,5.} 
{
\ifthenelse{\lengthtest{\x pt > 2pt} \AND \lengthtest{\x pt < 8pt}}{\filldraw[color=black,fill=black] (\x,0) circle (0.12);}{\filldraw[color=black,fill=white] (\x,0) circle (0.12);}
};

\end{tikzpicture}
\end{center}
The white dot indicates a ghost grid point, which we recruit in our construction.

At each interior grid point, the generator  $\mathcal L f$ (see \eqref{1d:generator}) can be approximated by
\begin{equation}\label{1d:Lfdisc}
(\mathcal L f)(k h) = \frac{f_{k+1} - 2f_k + f_{k-1}}{h^2} + O(h^2)\,,\qquad k\geq 1\,.
\end{equation}
This approximation is a second-order centered finite difference approximation to the second derivative operator $\partial_{xx}$. 
At the boundary grid point, $k=0$, we do not know the value of the `ghost' point $f_{-1}$ that is needed in \eqref{1d:Lfdisc}, so we solve for it using the boundary condition in \eqref{1d:generator}.  The discretized  boundary condition is 
\[
\kappa \frac{f_{1} - 2f_0 + f_{-1}}{h^2} = \frac{f_1 - f_{-1}}{2 h} + O(h^2) \Leftrightarrow f_{-1} = \frac{4\kappa f_0 + (h-2\kappa)f_1}{h+2\kappa} + O\left(\frac{h^4}{h+2 \kappa}\right) 
\] 
where again we used a second-order centered finite difference scheme to evaluate $\partial_x f$.  Using this equation to eliminate $f_{-1}$ in \eqref{1d:Lfdisc} with $k=0$ gives an approximation to the generator at the boundary as
\begin{equation}\label{1d:Lfdiscbdy}
(\mathcal L f)(0) = \frac{2f_1 - 2f_0}{h^2+2\kappa h} + O\left(\frac{h^2}{h+2 \kappa}\right)  \,.
\end{equation}
This local error estimate suggests that this approximation is globally second-order accurate in $h$ when $\kappa>0$.  When $\kappa=0$, which corresponds to a reflecting Brownian motion, the remainder term in \eqref{1d:Lfdiscbdy} may lead one to surmise that the approximation is only first-order spatially accurate.  However, it turns out that even in this case, the global error is still second-order in $h$, because roughly speaking, the local error at the boundary is also proportional to the mean occupation time of a reflecting Brownian motion near the boundary, which is $O(h)$.

Now, we construct a continuous-time Markov chain $Y_t$ with state space $\mathcal R^h$, whose generator $Q$ is the discrete approximation to $\mathcal L $.  That is, we set  
\begin{equation}\label{1d:Ldisc}
(Q f)_k = \begin{dcases}
\frac{f_{k+1} - 2f_k + f_{k-1}}{h^2} , &  k=1,2,\ldots,\\
 \frac{2f_1 - 2f_0}{h^2+2\kappa h} , & k=0.
\end{dcases} 
\end{equation}
The infinite matrix associated to the generator $Q$ has nonzero entries $Q_{kj} = 1/h^2$ if $j=k\pm 1$ and $k\geq 1$, $Q_{kk} = -2/h^2$ if $k\geq 1$, $Q_{01} = 2/(h^2+2\kappa h)$, $Q_{00} = -2/(h^2+2\kappa h)$.  One can verify that $Q$ is the generator of a continuous-time Markov chain, since the coefficients $Q_{kj}$ with $k\neq j$ are nonnegative, and $\sum_{j}Q_{kj} = 0$.  
Note that when $\kappa =0$ in \eqref{1d:Ldisc} one obtains the generator of a reflecting random walk.

Realizations of this sticky random walk $Y_t$ can be simulated exactly using a simple Monte Carlo method \cite{Do1942,Do1945,Gi1976,Gi1977}. Suppose $Y_{t_0}=kh$ at time $t_0$. The process is updated in two steps. 
\begin{itemize}[nosep]
\item Pick a state $\nu\in\{(k+1)h,(k-1)h\}$ with probabilities proportional to $Q_{k,k+1}$, $Q_{k,k-1}$ respectively. 
Specifically, for $k=1,2,\ldots$, we jump to the left or right states with equal probability, and for $k=0$, we always jump to $k=1$. 
\item Choose a random time $\tau \sim \mbox{Exp}(-Q_{kk})$, where $\mbox{Exp}(\lambda)$ is the distribution of an exponential random variable with  mean $1/\lambda$. Set $Y_{t_0+t} = kh$ for $0<t<\tau$ and set $Y_{t_0+\tau} = \nu$. 
That is, the process jumps to a new state $\nu$ after a random time $\tau$. 
\end{itemize}

 To sum up, the process $Y_t$ is a continuous-time random walk, a ``sticky random walk'' (SRW), with mean waiting time $h^2/2$ at interior grid points and  $\kappa h + h^2/2$ at the sticky boundary. See Appendix \ref{sec:listings} for Matlab code implementing this method.

 Realizations of $Y_t$ for $\kappa = 1$ and different values of $h$ are plotted in Figure \ref{fig:SBMsims}. 
Away from 0, the process is a random walk with random waiting times, which looks increasingly like a Brownian motion as $h\to 0$. When the process hits 0, however, it spends much longer there on average than it does at interior points. Overall the process looks like a Brownian motion which has been ``slowed down'' near 0, an observation that can be made rigorous as we discuss in Section \ref{sec:1dprobability}.

\begin{figure}\centering
\includegraphics[trim={4.5cm 0 0.9cm 0},clip,width=1\linewidth]{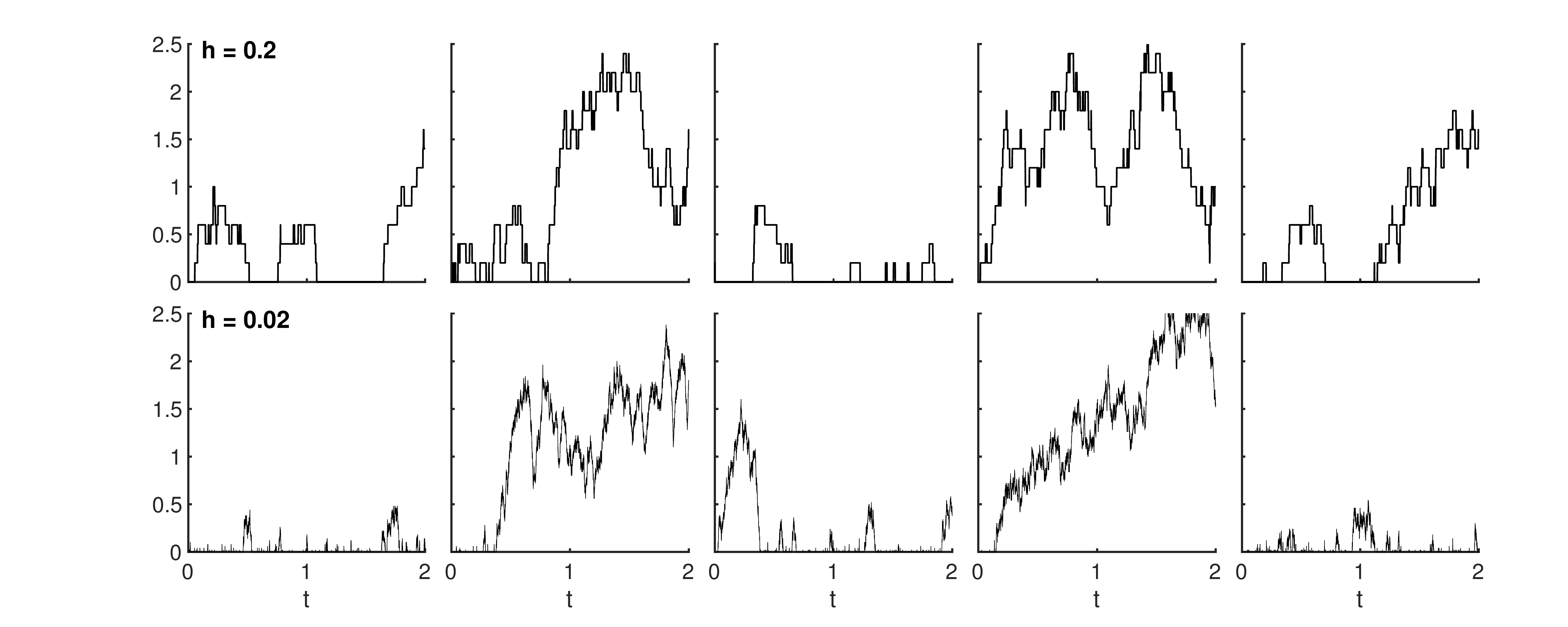}
\caption{ Independent realizations of the sticky random walk $Y_t$ with $\kappa=1$ and with space step $h=0.2$ (top), $h=0.02$ (bottom), over a time interval of length 2. As $h$ decreases, realizations of $Y_t$ approach realizations of a sticky Brownian motion. }
\label{fig:SBMsims}
\end{figure}

 \subsection{Properties of the numerical solution}
 
 This difference in the scaling with $h$ of the waiting times at boundary and interior points, nicely illustrates the unusual behavior of an SBM at the origin and gives insight into why it is sticky there.  The mean waiting time at interior points is $h^2/2$, the usual scaling for diffusion motion, which requires that $\E (\Delta Y)^2 / \tau \to cst$, where $\Delta Y=\pm h$ is the jump at each step of the algorithm. The mean waiting time at the origin is $\kappa h +  h^2/2 = \kappa h + o(h)$ -- longer by a factor of about $h$, so the scaling is \emph{ballistic} at the origin.  The amount of time the process spends at the origin is the same as it would spend at about $\kappa/h$ interior grid points, which is the number one would use to discretize an interval of length $\kappa$. Therefore, we expect we expect this SWR to spend a finite, non-zero amount of time at the origin as $h\to 0$, an amount of time which increases in proportion to $\kappa$.

 Interestingly, although the SRW spends finite time at the origin as $h\to 0$, in the limit it never spends a whole interval of time at the origin. This is evident from the numerical solution, since the waiting time at the origin approaches zero as $h\to 0$. This implies that every time an SBM hits the origin, it leaves right away, just as does a Brownian motion; yet somehow the total measure of the points at which it equals zero is positive. 

We have seen the ballistic scaling  of times near the origin before, in Lemmas \ref{lem:taueps} and \ref{l:mfpt_sbm}. Indeed, 
the mean holding time of the sticky random walk $Y_t$ at $0$ is identically the MFPT of a SBM with $\ell=h$ (see Lemma~\ref{l:mfpt_sbm}.)  The additive property of the MFPT then implies the following lemma.

\begin{lemma} \label{l:mfpt_Yt}
Given $h>0$ and $\ell \in \mathcal{R}_h$, the MFPT of $Y_t$ out of $[0,\ell]$ with initial condition $x \in [0,\ell] \cap \mathcal {R}_h$ is $\tau(x) =  - \kappa x  - (1/2) x^2 +  \kappa \ell  + (1/2) \ell^2$.  In particular, $\tau(0) = \kappa \ell + (1/2) \ell^2$.
\end{lemma}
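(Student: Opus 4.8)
The plan is to avoid solving the full discrete boundary value problem head-on, and instead to exploit the nearest-neighbour structure of the sticky random walk together with the strong Markov property, so that $\tau$ decomposes into a telescoping sum of one-step first-passage times. Write $\ell = Nh$ with $N$ a positive integer (possible since $\ell \in \mathcal{R}_h$), and for $0 \le k \le N-1$ let $g_k$ denote the expected first-passage time of $Y_t$ from $kh$ to $(k+1)h$. Since $Y_t$ moves only between neighbouring grid points and can leave $[0,\ell]$ only by reaching $\ell$, a walk started at $jh$ must hit $(j+1)h, (j+2)h, \dots, Nh$ in order; the strong Markov property applied at these successive hitting times then gives $\tau(jh) = \sum_{k=j}^{N-1} g_k$.

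Next I would evaluate the $g_k$ by first-step analysis using the rates in \eqref{1d:Ldisc}. From the origin the walk holds for mean time $(h^2+2\kappa h)/2 = \kappa h + h^2/2$ and then jumps deterministically to $h$, so $g_0 = \kappa h + h^2/2$; this is exactly the MFPT of an SBM out of $[0,h]$ started at $0$ furnished by Lemma~\ref{l:mfpt_sbm} with $\ell$ replaced by $h$, i.e. the identity of holding times noted just before the lemma. For $k \ge 1$, from $kh$ the walk holds for mean time $h^2/2$, then with probability $1/2$ it reaches $(k+1)h$ and with probability $1/2$ it steps to $(k-1)h$, from which it needs a further expected time $g_{k-1} + g_k$; hence $g_k = h^2/2 + \tfrac12(g_{k-1}+g_k)$, which simplifies to the recursion $g_k = g_{k-1} + h^2$ and yields $g_k = \kappa h + h^2/2 + k h^2$. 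Substituting into $\tau(jh) = \sum_{k=j}^{N-1} g_k$, carrying out the resulting arithmetic sum, and re-expressing in terms of $x = jh$ and $\ell = Nh$ via $h^2 N = h\ell$ and $h^2 N^2 = \ell^2$, the terms linear in $h$ cancel and one is left with $\tau(x) = \kappa(\ell-x) + \tfrac12(\ell^2-x^2) = -\kappa x - \tfrac12 x^2 + \kappa\ell + \tfrac12\ell^2$, and in particular $\tau(0) = \kappa\ell + \tfrac12\ell^2$.

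An equivalent and shorter route, worth recording, is to note that $\tau$ is the unique solution of the finite nonsingular linear system $Q\tau = -1$ on $\{0,h,\dots,(N-1)h\}$ with $\tau(\ell)=0$, and that the centred second difference appearing in \eqref{1d:Ldisc} reproduces $\partial_{xx}$ exactly on quadratic polynomials; therefore the continuous MFPT $\tau(x) = -\kappa x - \tfrac12 x^2 + \kappa\ell + \tfrac12\ell^2$ from Lemma~\ref{l:mfpt_sbm} automatically satisfies every interior equation, and a one-line computation confirms it also meets the discrete boundary relation $\tau_1 - \tau_0 = -(h^2+2\kappa h)/2$ and the condition $\tau(\ell)=0$; by uniqueness it is the MFPT of $Y_t$.

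I do not anticipate a genuine obstacle here. The only step requiring mild care is the first-step recursion for $g_k$ — specifically the claim that after a backward step the walk ``restarts'' and needs a fresh independent copy of $g_k$ — which is the standard strong Markov argument at grid-point hitting times; everything else is bookkeeping with finite sums.
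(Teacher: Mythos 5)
Your first argument is exactly the paper's proof, fleshed out: the paper compresses it into the single remark that the holding time at $0$ equals the SBM's MFPT out of $[0,h]$ and that ``the additive property of the MFPT then implies the lemma,'' while you supply the telescoping decomposition $\tau(jh)=\sum_{k=j}^{N-1}g_k$ and the first-step recursion $g_k=g_{k-1}+h^2$ that make the additivity claim precise; both computations are correct. Your second route (the centred second difference is exact on quadratics, so the continuous MFPT solves $Q\tau=-1$ verbatim, including the boundary row) is a clean alternative verification that the paper does not give, but it is not needed once the first argument is in place.
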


Another useful property of $Y_t$ is that once a simulation has been performed for some fixed $\kappa$, one can obtain a trajectory for any other value $\kappa'\neq \kappa$ simply by scaling the holding time of the jumps which leave the origin, since the probabilities of jumping to each state do not change; see  Figure~\ref{fig:1dnumerical} for an illustration. 
This is a useful property as it allows one to investigate multiple values of $\kappa$ with one single simulation. 

\begin{figure}\centering
\includegraphics[width=0.65\textwidth]{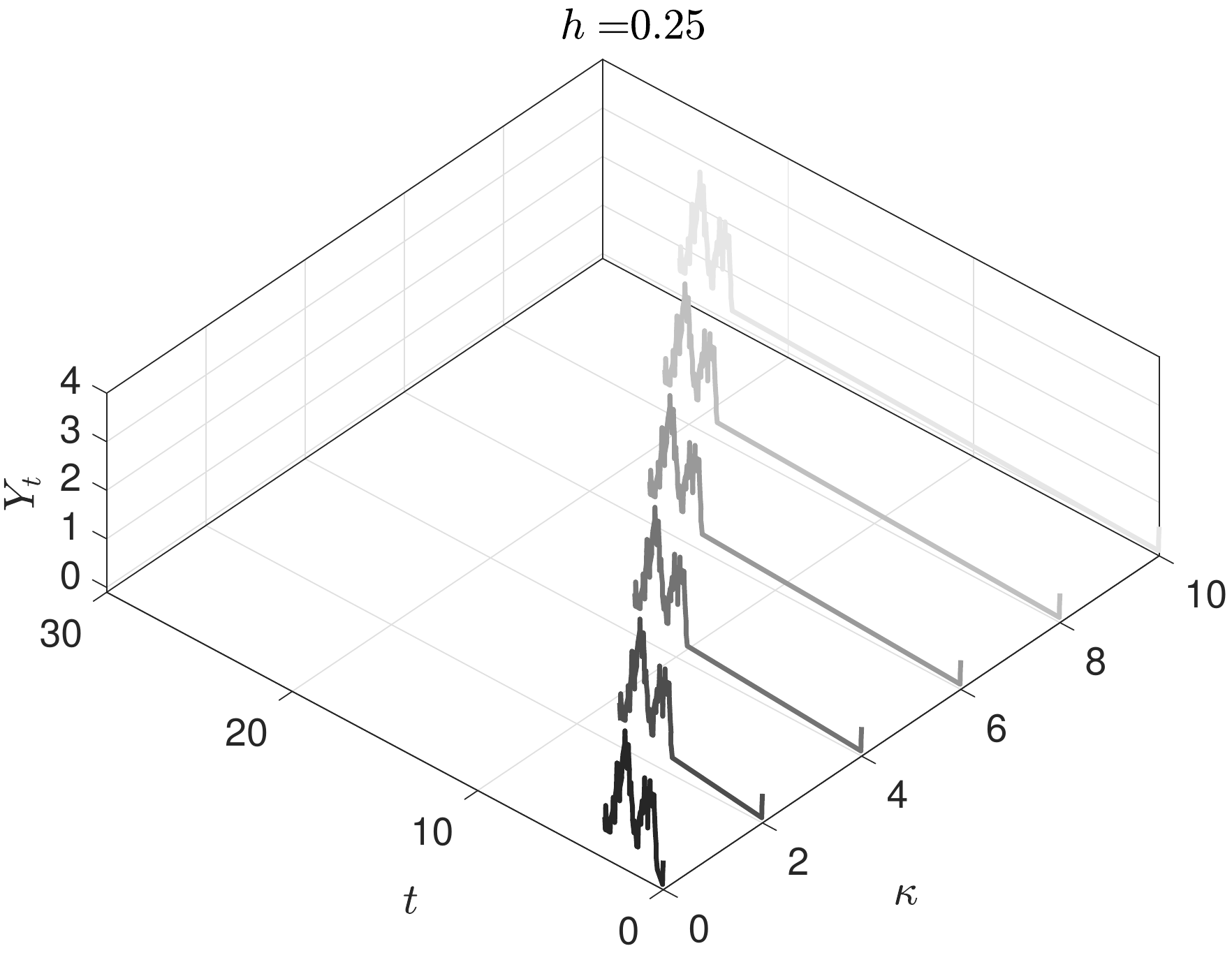} 
\caption{ Realizations of the sticky random walk with $h=0.25$ and varying $\kappa$.  Each realization is producing by running the Monte Carlo method described in the text using the same sequence of uniform random variables, but with varying $\kappa$.  Note that each realization hits the value zero only once in the third jump, and as $\kappa$ is increased, the amount of time spent at zero increases as illustrated.   
}
\label{fig:1dnumerical}
\end{figure}

\subsection{Accuracy and efficiency of the numerical method} 

We compare the accuracy and efficiency of this numerical method for simulating a SBM, to the accuracy and efficiency of a method which simulates \eqref{1d:dX} directly using a symmetrized Euler-Maruyama (EM) approximation \eqref{1d:symm_euler} and a Morse potential for $U^\epsilon(x)$. 
Figure \ref{fig:accuracy_sbm_1d} shows that 1\% accuracy for the sticky random walk requires a mean time step of about 0.1. Contrast this to the EM scheme, which, for an accuracy of about 2\%, requires a potential width no more than about $118^{-1}$ and a time step of about $1.25\times 10^{-6}$. In this example, the time step of the sticky random walk method is about \emph{five orders of magnitude larger}, to obtain a comparable level of accuracy.

In applications, one may with to use a SBM to model a potential with a small, but not infinitesimal range. How much more efficient is the sticky random walk in capturing the statistics of \eqref{1d:dX} for finite $\epsilon$, and how accurately does it do so? To test this we ran a high-resolution simulation of \eqref{1d:dX} ($\delta t = 3.91\times10^{-6}$) using a Morse potential (Example \ref{ex:morse}) with sticky parameter $\kappa = 30$, and range parameters $a\approx 30$ (well depth $D_e =  7.22$), characteristic of certain depletion interactions \cite{Meng:2010gsa,Perry:2015ku}, and $a\approx 100$ ($D_e=8.5$),  characteristic of certain DNA-mediated interactions \cite{Wang:2015ep}. 
We computed the statistics $\E X_1^\epsilon$, $\mathbb{P}(X_1^\epsilon<0.15)$, and compared them to statistics estimated using the sticky random walk method with $\kappa=30$.
Figure \ref{fig:depletion_dna_numerics} shows that the sticky random walk method gives virtually the same estimates for all time steps; in particular the largest mean time step used, $\delta t \approx 10^{-2}$, is as good as any smaller timestep. In contrast, statistics computed from a direct simulation of \eqref{1d:dX} are nowhere near their converged values until a timestep of $\delta t = 3\times 10^{-5}, 2\times 10^{-6}$ for $a=30,100$ respectively, \emph{more than 2 orders of magnitude smaller} than for the sticky random walk. 

The sticky random walk method will always contain a non-zero relative error no matter what the timestep, since it is only an approximation to the true dynamics. How large is this error? 
For the mean, $\E X_1^\epsilon$, the relative error between the sticky random walk and the high-resolution simulation is $40\%, 13\%$ for $a=30,100$ respectively, and for $\mathbb{P}(X_1^\epsilon<0.15)$, the relative errors are $6\%, 2\%$. While the relative error in computing the mean may seem high, notice that the mean is sensitive to the range and shape of the potential, since we are computing this statistic at short times so most of the probability is concentrated in the boundary layer associated with the potential. In practice, experimental measurements of a quantity that depends so sensitively on the shape of a short-ranged potential are infeasible, because measurement noise is typically larger than the width of the potential \cite{Perry:2015ku}.  In addition, the quantity of interest would usually be the particle diameter plus the mean distance, which would have a significantly smaller relative error, less than 1\% for a 1$\mu$m particle. A coarser statistic such as $\mathbb{P}(X_1^\epsilon<0.15)$ is more practical to measure, and hence is a better test for the accuracy of the sticky random walk model; here the relative errors are much smaller.


\begin{figure}[ht!]
  \begin{minipage}{\textwidth}
      \begin{minipage}[b]{0.33\textwidth}
    \includegraphics[width=\textwidth]{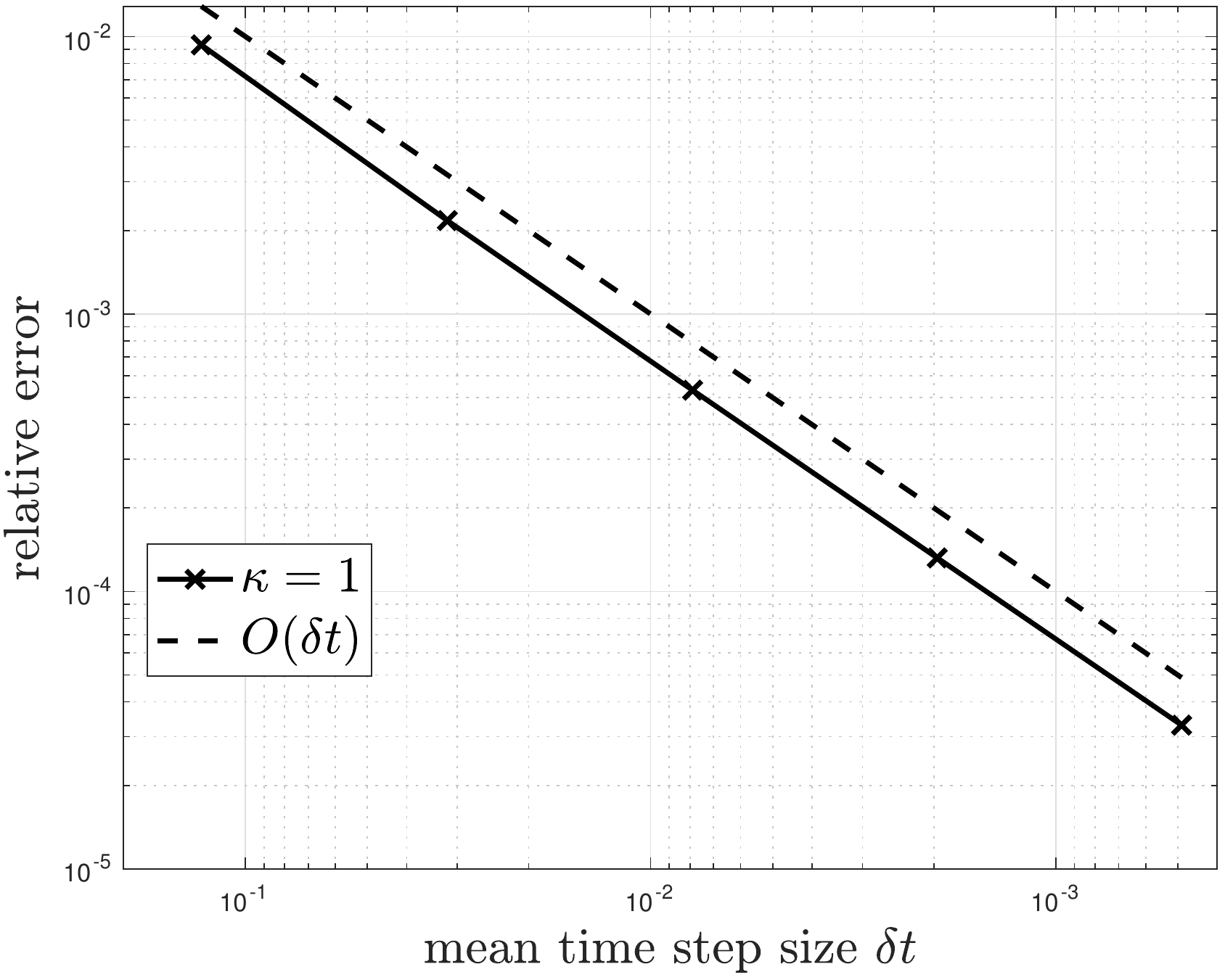} 
  \end{minipage}   \hfill
  \begin{minipage}[b]{0.63\textwidth}
  \centering
\begin{tabular}[b]{c|c|c|c}
\begin{tabular}{c} \\
\\
$\Delta t$ \end{tabular}    &  \begin{tabular}{c} $D_e=2.5$  \\ $a\approx13.6$ \\ $\epsilon\approx0.27$ \end{tabular} & \begin{tabular}{c} $D_e=3.5$ \\
$a\approx31.4$ \\
$\epsilon\approx0.18$ \end{tabular} & \begin{tabular}{c} $D_e=5$ \\
$a\approx118$ \\
$\epsilon\approx0.09$ \end{tabular} \\
\hline
$10^{-5}$     & $40\%$ & $7.6\%$ & $134.7\%$   \\
$5 \cdot 10^{-6}$     & $40\%$ & $7.2\%$ & $54.5\%$ \\
$2.5 \cdot 10^{-6}$     & $40\%$ & $7.0\%$ & $2.9 \%$ \\
$1.25 \cdot 10^{-6}$     & $40\%$ & $6.5\%$ &  $2.1 \%$ \\
$6.25 \cdot 10^{-7}$     & $40\%$ & $6.3\%$ & $3.5\%$  
\end{tabular}  
\vspace{0.0cm}
  \end{minipage} 
      \begin{minipage}[b]{0.33\textwidth}
      \centering
(a) 
\end{minipage}
\begin{minipage}[b]{0.66\textwidth}
\centering
(b) 
\end{minipage}
  \end{minipage}
\caption{ This figure compares the accuracy of (a) the sticky random walk with (b) the symmetrized Euler-Maruyama scheme in \eqref{1d:symm_euler} with $\kappa=1$ and $U^\epsilon$ a Morse potential. Accuracy is measured with respect to the local solution $u(0,1)$ of \eqref{eq:1dibvp} with initial condition $\phi(x) = \exp(-(x-3)^2)$.  As benchmark solution, we use the semi-analytical solution given in Theorem 1 of Ref.~\cite{Peskir:2014}.  In (a), we see that the sticky random walk is within $1\%$ accuracy with a (mean) time step size of about $0.1$, and is second-order accurate.   In (b), we see that  about $2\%$ accuracy requires a potential width $a^{-1} \leq 118^{-1}$ 
and a time step size $\delta t \le 1.25 \times 10^{-6}$.  
 }
\label{fig:accuracy_sbm_1d}
\end{figure}

\begin{figure}[ht!]
  \begin{minipage}{\textwidth}
      \begin{minipage}[b]{0.5\textwidth}
        \includegraphics[width=\textwidth]{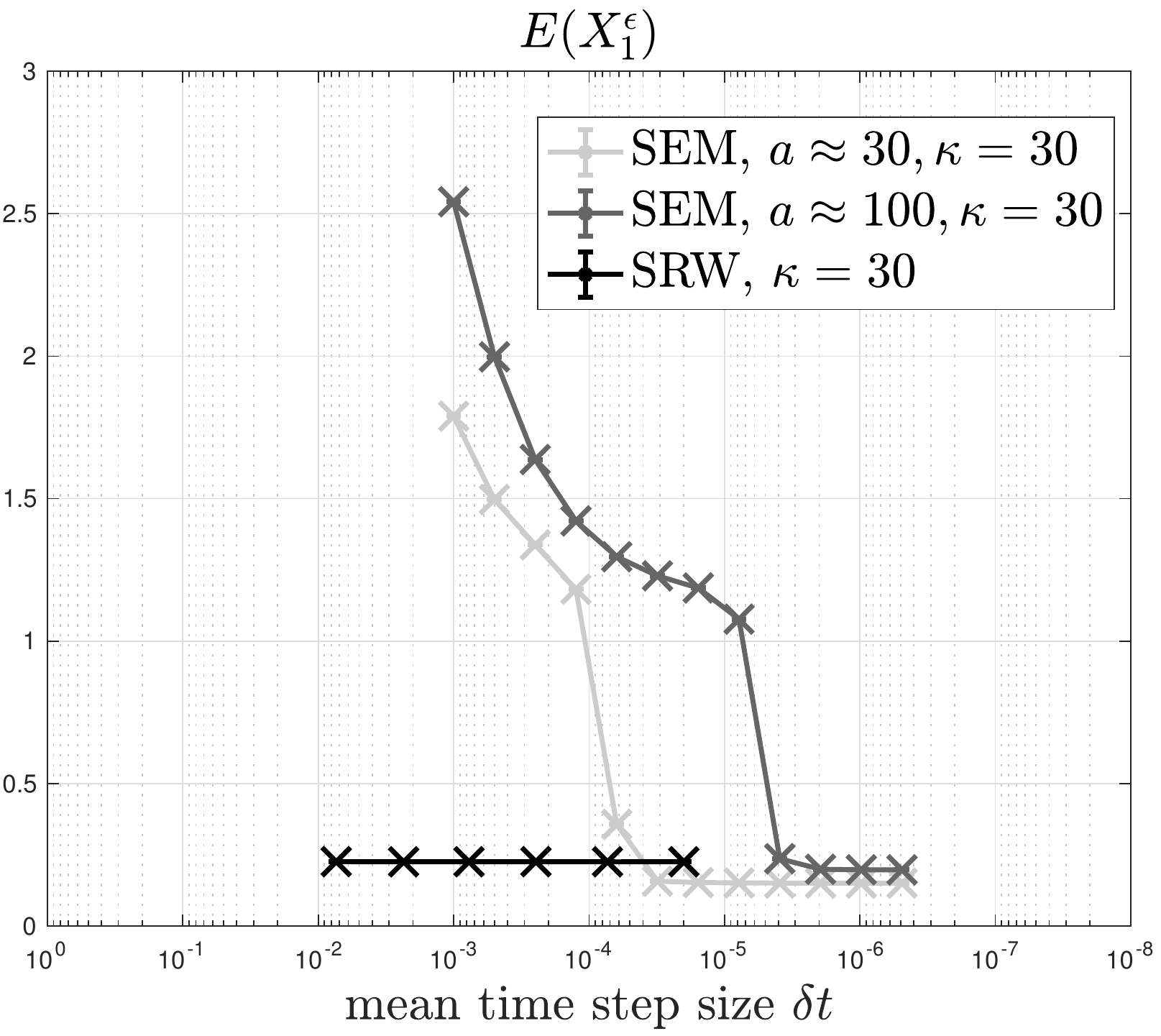} 
  \end{minipage}   \hfill
  \begin{minipage}[b]{0.5\textwidth}
      \includegraphics[width=\textwidth]{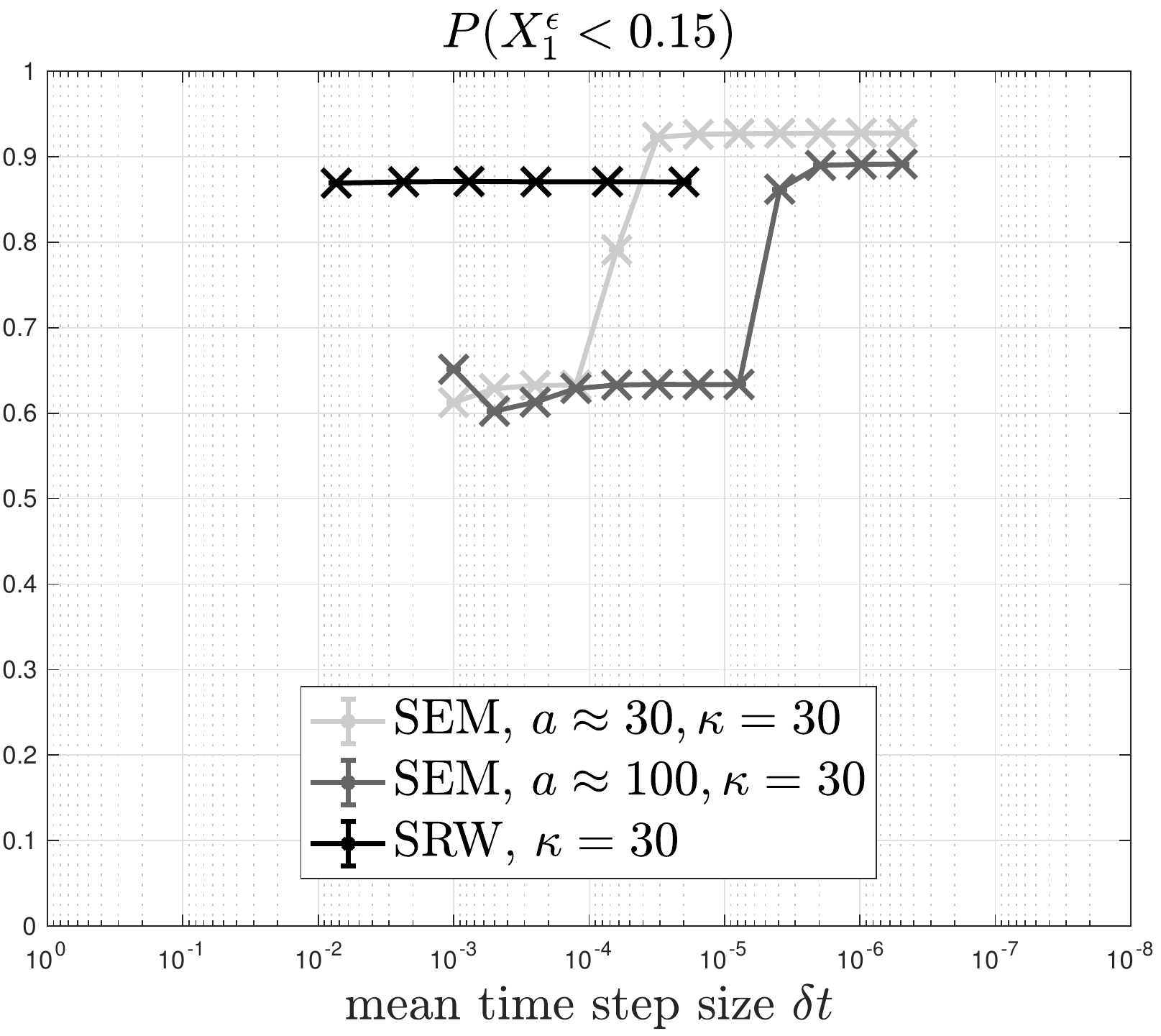} 
  \end{minipage} 
      \begin{minipage}[b]{0.5\textwidth}
      \centering
(a) 
\end{minipage}
\begin{minipage}[b]{0.5\textwidth}
\centering
(b) 
\end{minipage}
  \end{minipage}
\caption{  Panel (a) and (b) show symmetrized Euler-Maruyama (SEM) approximations of $\E(X_1^{\epsilon})$ and $P(X_1^{\epsilon}<0.15)$ respectively, at the parameter values and time step sizes indicated in the figure.  Also plotted are sticky random walk (SRW) approximations of these quantities.  For both SEM and SRW, $10^7$ samples are used.  In all cases, SRW is converged at a mean time step sizes several orders of magnitude larger than SEM, because in the sticky limit the strong short-ranged interactions are eliminated.
 }
\label{fig:depletion_dna_numerics}
\end{figure}

\subsection{PDEs that can be solved by running a sticky random walk}\label{sec:1dFK}

It should be clear from the calculations in section \ref{sec:1dnumerics} that the generator $Q$ of $Y_t$ is a locally consistent approximation of the generator $\mathcal L$ of a SBM. 
How does this local consistency connect to more global results, for example about the accuracy with which $Y_t$ can reproduce statistics of a sticky Brownian motion? 
In this section we present new Feynman-Kac formulae to show that from trajectories of $Y_t$, one can numerically solve certain PDEs with Feller's more general boundary condition to second-order accuracy in the space step.  


Consider the heat equation with Feller's boundary condition 
\begin{equation} \label{eq:1dibvp}
\begin{dcases}
\partial_t u(x,t) = \partial_{xx} u(x,t) & x \ge 0, t \ge 0 \\
u(x,0) = \phi(x) & x \ge 0 \\
p_1 u(0, t) - p_2 \partial_x u(0,t) + p_3  \partial_{xx} u(0, t) = 0 &  t \ge 0
\end{dcases}
\end{equation}
where $\phi: [0, \infty) \to \mathbb{R}$ and $p_1, p_2, p_3$ are parameters satisfying 
$p_1 + p_2 + p_3 = 1$ and $p_1, p_2, p_3 \ge 0$ \cite{Feller:1952te,Feller:1957ua,Ito:1963wl}. 
Special cases of this boundary condition include: 
\begin{description}
\item[1) $p_1=1$:] Dirichlet b.c. corresponding to a stopped Brownian motion;
\item[2) $p_2=1$:] Neumann b.c. corresponding to a reflecting Brownian motion;
\item[3) $p_3=1$:] Wentzell b.c. corresponding to an absorbed Brownian motion;\footnote{An absorbed Brownian motion is one where the Brownian motion has a fixed point at the boundary, which is consistent with what happens as the sticky parameter in \eqref{1d:FPlim} becomes infinite; whereas, a stopped Brownian motion corresponds to a Brownian motion that is terminated at the boundary.} and,
\item[4) $p_3=0$:] Robin b.c. corresponding to an elastic Brownian motion.
\end{description}
The general case corresponds to a Brownian motion that exhibits stopping, stickiness, and reflection at various rates \cite{Ito:1963wl}. 

The following result shows how to numerically solve \eqref{eq:1dibvp} when $p_2 > 0$ using the sticky random walk developed in \S\ref{sec:1dnumerics}. The case $p_2 = 0$ can be solved in a similar manner using a continuous-time random walk with absorbing boundary conditions at $0$, but for notational brevity, we omit this case.  

\begin{theorem} \label{thm:sbm_ibvp}
Assume $p_2 >0$, set 
\[
\kappa = p_3 / p_2\,, \qquad c =   -2 p_1 / (h p_2 + 2 p_3) \,.
\] Suppose further that \eqref{eq:1dibvp} has a solution $u(x,t)$ whose first four derivatives are continuous and bounded.   Let $Y_t$ be a sticky random walk, i.e., a Markov jump process on the grid $\mathcal R^h$ with generator $Q$ defined by \eqref{1d:Ldisc}.
At every grid point $x_i \in \mathcal R^h$, define the  function \begin{equation} \label{eq:uh}
u^h_i(t) = 
\E_{x_i} \left( \phi(Y_t) \exp\left(c \int_0^t 1_{ \{ 0 \} }(Y_s) ds \right) \right) \;, 
\end{equation}
where $\E_{x_i}$ denotes expectation conditional on $Y_0 = x_i$.  Then for all $T>0$ and for all $h>0$ sufficiently small, there exists $C(T)>0$ such that \[
\sup_{i \ge 0, t \in [0, T]} |u^h_i(t) - u(x_i,t)| \le C(T) h^2 \;.
\]
\end{theorem}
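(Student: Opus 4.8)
The plan is to prove this via a discrete Feynman--Kac representation combined with a local-consistency/stability argument of Markov-chain-approximation type. First I would establish that $u^h_i(t)$ defined in \eqref{eq:uh} is the solution of the linear ODE system on the grid
\[
\frac{d}{dt} u^h_i(t) = (Q u^h)_i(t) + c\, 1_{\{0\}}(x_i)\, u^h_i(t)\,, \qquad u^h_i(0) = \phi(x_i)\,,
\]
which follows from the standard Feynman--Kac formula for continuous-time Markov chains with a (bounded, here nonpositive) killing/potential rate $c\,1_{\{0\}}$; the boundedness of $\phi$ and of the rate, together with the uniformly bounded jump rates away from the origin and the explicit holding time at $0$, guarantee the expectation is finite and the representation is valid. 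The key point is that this semidiscrete system is exactly the spatial discretization of the PDE \eqref{eq:1dibvp}: at interior points $(Qu^h)_i = \partial_{xx}$-difference quotient, and at $x_i=0$ the combination $(Qu^h)_0 + c\, u^h_0$ is, by the choice $\kappa = p_3/p_2$ and $c = -2p_1/(hp_2+2p_3)$, precisely the discretization of the Feller boundary condition $p_1 u - p_2\partial_x u + p_3\partial_{xx}u = 0$ after eliminating the ghost point $f_{-1}$ — this is a direct extension of the computation leading to \eqref{1d:Lfdiscbdy}, now retaining the $p_1 u(0,t)$ term.

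Next I would define the grid error $e^h_i(t) = u^h_i(t) - u(x_i,t)$ and write its evolution: $\frac{d}{dt}e^h_i = (Qe^h)_i + c\,1_{\{0\}}(x_i)\, e^h_i + \tau^h_i(t)$, where the truncation error $\tau^h_i(t)$ is obtained by plugging the true solution $u(x_i,t)$ into the semidiscrete scheme. Using Taylor expansion and the assumed boundedness of the first four derivatives of $u$, the interior truncation error is $O(h^2)$ uniformly, and at $x_i=0$ the truncation error is $O\!\big(h^2/(h+2\kappa)\big)$ in the case $\kappa>0$ — see \eqref{1d:Lfdiscbdy} — which is $O(h^2)$; the extra $p_1$-term is handled since its discretization $-c\,u$ matches the continuous term $p_1 u/(p_2 h \cdot (\ldots))$-scaled value up to the same order. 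In the borderline case $\kappa = 0$ (so $p_3=0$, Robin/Neumann), the pointwise truncation error at the origin is only $O(h)$, and here I would invoke the refined argument sketched in the text after \eqref{1d:Lfdiscbdy}: the contribution of the boundary error to the global error is weighted by the mean occupation time of $Y_t$ at the origin, which is $O(h)$ (by Lemma~\ref{l:mfpt_Yt} with $\ell = h$, $\kappa=0$ giving holding time $h^2/2$, and the occupation density near $0$ being $O(h)$), recovering an $O(h^2)$ net effect.

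Finally I would close the estimate by a discrete Duhamel / maximum-principle argument. Let $P^h_t = e^{tQ}$ be the (sub-)Markov semigroup generated by $Q$; since $c\,1_{\{0\}} \le 0$, the perturbed semigroup $S^h_t$ generated by $Q + c\,1_{\{0\}}$ is a contraction on $\ell^\infty(\mathcal R^h)$. Duhamel's formula gives $e^h(t) = \int_0^t S^h_{t-s}\, \tau^h(s)\, ds$, whence $\|e^h(t)\|_\infty \le \int_0^t \|\tau^h(s)\|_\infty\, ds \le t \cdot C h^2$, uniformly for $t \in [0,T]$, yielding the claimed bound with $C(T) = CT$. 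I expect the main obstacle to be the boundary truncation analysis in the $\kappa = 0$ degenerate case: there the naive pointwise error is only first order, so one genuinely needs the occupation-time weighting (equivalently, a summation-by-parts argument showing that the boundary row of $Q$ contributes an $\ell^\infty \to \ell^\infty$ operator whose effective weight on the single boundary node vanishes like $h$), and making this rigorous requires controlling the Green's function of $Q$ near the origin rather than relying on a crude contraction bound. A secondary technical point is verifying that the assumed smoothness of $u$ (four bounded derivatives, including up to the boundary, which the hypothesis supplies) is enough to control the Taylor remainders uniformly in $i$ and $t$; this is routine given the hypothesis but should be stated carefully.
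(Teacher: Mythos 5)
Your proposal is correct and follows essentially the same route as the paper's proof: identify the semidiscrete ODE system \eqref{eq:odesM} as the ghost-point finite-difference discretization of \eqref{eq:1dibvp} (with the choice of $\kappa$ and $c$ absorbing the Feller boundary condition), represent its solution stochastically via It\^o's formula for jump processes applied to $M_s = u^h(Y_s,t-s)e^{c\int_0^s 1_{\{0\}}(Y_r)\,dr}$, and conclude by consistency plus $\ell^\infty$ stability. Your Duhamel/contraction argument and your discussion of the $\kappa=0$ boundary truncation are in fact more explicit than the paper, which leaves the stability step as an assertion and relegates the occupation-time heuristic for $\kappa=0$ to the remarks following \eqref{1d:Lfdiscbdy}.
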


Although there are well-posedness and semigroup generation results for the heat equation with generalized, two-sided Feller boundary conditions in a bounded domain \cite{MR1695147,favini2002heat}, we are not aware of results for the heat equation on the half line with Feller boundary conditions that gives the required regularity results for $u(x,t)$ in Theorem~\ref{thm:sbm_ibvp}.

\begin{proof}
Our proof will proceed in two steps. First, we show that a second-order spatial discretization of the PDE \eqref{eq:1dibvp} using finite differences is given by the
 solution $u^h(t)\in \mathcal R^h$ to the linear, infinite-dimensional system of ODEs 
\begin{equation} \label{eq:odesM}
\dot u^h_i(t) =  \begin{dcases} \frac{u^h_{i+1}(t) - 2 u^h_{i}(t) + u^h_{i-1}(t)}{h^2}  &  i = 1, 2, 3, ... \\
\frac{2 u^h_1(t) - 2 u^h_0(t)}{h^2 + 2 h \kappa}  + c u^h_0(t) & i = 0
\end{dcases} 
\end{equation}
with initial condition $u^h(0) = \left. \phi \right|_{ \mathcal R^h}$. 
Together with the regularity of $u(x,t)$, this will imply that $u^h_i(t)$ and $u(x_i,t)$ are close in an $\ell_{\infty}$ norm over $\mathcal R^h$.  Second, we show that the solution $u^h_i(t)$ to \eqref{eq:odesM} admits the stochastic representation \eqref{eq:uh}.   
 
 
Note that another way to write \eqref{eq:odesM} is to define $u^h(x_i,t)=u_i^h(t)$, and write 
 \begin{equation} \label{eq:odes2M}
  \partial_t u^h(x_i, t) = Q u^h(x_i, t) + c  u^h(x_i, t) 1_{\{ 0 \}}(x_i) \;, \quad u^h(x_i, 0) = \phi(x_i) \;,
\end{equation}
where $Q$ is the generator of $Y_t$ defined in \eqref{1d:Ldisc}. The difference between \eqref{eq:odes2M} and the backward equation associated with $Y_t$  is the presence of the term $cu_0^h$, which causes $u(0,t)$ to decrease in time (recall $c<0$).

For the first step, i.e. obtaining the discretization  of the PDE \eqref{eq:1dbvp}, notice that the ODEs \eqref{eq:odesM} for $k \ge 1$ are obtained by replacing $\partial_{xx}$ in \eqref{eq:1dibvp} by the centered finite difference given in \eqref{1d:Lfdisc}; and the ODE at $k=0$ is obtained by incorporating a finite difference discretization of the boundary condition into a centered finite difference approximation of $\partial_{xx}$ at $0$.  
Specifically, given a function $f$ that satisfies the boundary condition in \eqref{eq:1dibvp}, use a ghost grid value $f_{-1}$ combined with a centered finite difference approximation to $f'(0)$ and $f''(0)$ to obtain
\[
p_1 f_0 - p_2 \frac{f_1 - f_{-1}}{2 h} + p_3  \frac{f_{1} - 2f_0 + f_{-1}}{h^2}  = O(h^2)  \;.
\]
Using the above equation to eliminate $f_{-1}$ from \eqref{1d:Lfdisc} when $k=0$, gives an approximation to $\partial_{xx}$ at the boundary as
\begin{equation}
\partial_{xx} f (0) =  \frac{p_2}{h (h p_2 + 2 p_3)} ( f_1 - f_0)  + c f_0 + O\left(\frac{h^2}{h p_2 + 2 p_3} \right) \;.
\end{equation}
Substituting $p_3 = \kappa p_2$ into the first term then gives the ODE in \eqref{eq:odesM} for $k=0$.  

For the second step, 
define
\[
M_s = u^h(Y_s, t - s) e^{c  \int_0^s 1_{ \{ 0 \} }(Y_r) dr}\,.
\]
Applying It\^o's formula for jump processes (see e.g.~\cite{protter2004stochastic}) we obtain that
\begin{align*}
 \E M_s -\E M_0&= \E \int_0^s \Big(  -\partial_t u^h(Y_r, t-r)  + Q u^h(Y_r, t-r)  \\
 & \qquad \qquad 
 + c u^h(Y_r, t-r)1_{ \{ 0 \} }(Y_{r})  \Big) e^{c  \int_0^r 1_{ \{ 0 \} }(Y_{r'}) dr'} dr\,.
\end{align*} 
By \eqref{eq:odes2M}, the right-hand side is zero. Evaluating this equation at $s=t$ and using the initial condition in \eqref{eq:1dibvp} gives 
\[
u^h(x_i, t) = \E_{x_i} \left( \phi (Y_t) e^{c  \int_0^t 1_{ \{ 0 \} }(Y_r) dr} \right) \;,
\] 
as required.
\end{proof}

One can similarly solve Dirichlet-Poisson problems.  For example, consider the boundary
value problem
\begin{equation} \label{eq:1dbvp}
\begin{dcases}
\partial_{xx} u(x) = - \phi(x)  \;, & x \in (0, \ell) \\
p_1 u(0) - p_2 \partial_x u(0) + p_3  \partial_{xx} u(0) = 0  \;, &
u(\ell) = 0 \;,
\end{dcases}
\end{equation}
where $\phi: [0, \ell] \to \mathbb{R}$ is a given function and $p_1, p_2, p_3$ are parameters satisfying 
$p_1 + p_2 + p_3 = 1$ and $p_1, p_2, p_3 \ge 0$.  Then we have the following result.

\begin{theorem} \label{thm:sbm_bvp} 
Assume $p_2 >0$, set $\kappa = p_3 / p_2$ and $c =   -2 p_1 / (h p_2 + 2 p_3) $; and suppose that \eqref{eq:1dbvp} has a solution $u(x)$ whose first four derivatives are continuous and bounded.
Let $Y_t$ be a sticky random walk, i.e., a Markov jump process on the grid $\mathcal R^h$ with generator $Q$ defined by \eqref{1d:Ldisc}.
For every grid point $x_i \in \mathcal R^h \cap [0, \ell]$, define the function \begin{equation} \label{eq:uh2}
u^h_i = \E_{x_i} \left( \int_0^{\tau^h} \phi(Y_t) \exp\left( c \int_0^t 1_{ \{ 0 \} }(Y_s) ds \right) dt \right) \;, 
\end{equation}
where $\E_{x_i}$ denotes expectation conditional on $Y_0 = x_i$ and $\tau^h$ is the random stopping time $\tau^h = \inf\{ t \ge 0 \mid Y_t = \ell\}$.  Then for all $h>0$ sufficiently small, there exists $C>0$ such that \[
\sup_{x_i \le \ell} |u^h_i - u(x_i)| \le C h^2 \;.
\]
\end{theorem}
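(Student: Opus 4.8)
The plan is to mirror the two-step structure of the proof of Theorem~\ref{thm:sbm_ibvp}, adapted to the elliptic (boundary value) setting.

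\textbf{Step 1: discretization and a stochastic representation of the discrete solution.} First I would write down the finite-difference discretization of \eqref{eq:1dbvp}: replace $\partial_{xx}$ by the centered difference at interior grid points $x_i\in(0,\ell)$, eliminate the ghost value $f_{-1}$ using the discretized Feller boundary condition exactly as in the derivation of \eqref{1d:Lfdiscbdy}, and impose $u^h=0$ at $x=\ell$. This yields the finite linear system
\[
Q\,u^h(x_i) + c\,u^h(x_i)\,1_{\{0\}}(x_i) = -\phi(x_i)\quad (0\le x_i<\ell),\qquad u^h(\ell)=0,
\]
where $Q$ is the restriction to $\{0,h,\dots,\ell\}$ of the generator in \eqref{1d:Ldisc} with absorption at $\ell$. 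Since $c\le 0$, the matrix of this system is a nonsingular M-matrix, so $u^h$ exists and is unique. Then, exactly as in the second step of the proof of Theorem~\ref{thm:sbm_ibvp}, I would apply It\^o's formula for jump processes to
\[
M_s = u^h(Y_{s\wedge\tau^h})\,e^{c\int_0^{s\wedge\tau^h}1_{\{0\}}(Y_r)\,dr} + \int_0^{s\wedge\tau^h}\phi(Y_r)\,e^{c\int_0^r 1_{\{0\}}(Y_{r'})\,dr'}\,dr,
\]
show $M_s$ is a (local, then true) martingale using the linear system, take expectations, and let $s\to\infty$; using that $\tau^h<\infty$ almost surely with $\E_{x_i}\tau^h<\infty$ (a consequence of Lemma~\ref{l:mfpt_Yt}), $u^h(\ell)=0$, and $|e^{c\int\cdots}|\le 1$ since $c\le 0$, this gives the representation \eqref{eq:uh2} for $u^h_i$. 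Finiteness of the expectation in \eqref{eq:uh2} is immediate from the bound $\|\phi\|_\infty\,\E_{x_i}\tau^h$ together with Lemma~\ref{l:mfpt_Yt}.

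\textbf{Step 2: consistency and the convergence estimate.} Set $e_i=u^h_i-u(x_i)$. Substituting the exact solution $u$ into the discrete operator and Taylor-expanding (using the assumed boundedness of the first four derivatives of $u$) shows that $e$ solves the \emph{same} linear system with right-hand side the local truncation error $r_i$: at interior points $|r_i|\le C h^2$ by the standard centered-difference estimate; at $x=\ell$, $r=0$ since $u(\ell)=0$; and at $x=0$, $|r_0|\le C h^2/(h+2\kappa)$ by the error term in \eqref{1d:Lfdiscbdy}. Applying the representation from Step~1 to $e$ gives
\[
|e_i| = \Bigl|\E_{x_i}\int_0^{\tau^h}(-r(Y_t))\,e^{c\int_0^t 1_{\{0\}}(Y_s)\,ds}\,dt\Bigr| \le C h^2\,\E_{x_i}\tau^h + |r_0|\,\E_{x_i}\!\bigl(\text{time at }0\text{ before }\tau^h\bigr).
\]
By Lemma~\ref{l:mfpt_Yt}, $\E_{x_i}\tau^h\le\kappa\ell+\ell^2/2$ uniformly in $h$ and $i$, so the first term is $O(h^2)$. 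For the second term, the expected occupation time at $0$ before $\tau^h$ equals the mean holding time at $0$ times the expected number of visits to $0$; the holding time is $\kappa h + h^2/2$ and the expected number of visits is $O(\ell/h)$ (a discrete Green's-function / geometric-trials estimate), so the occupation time is $O(\kappa\ell+h\ell)$, and in particular $O(h\ell)$ when $\kappa=0$. Hence $|r_0|\cdot O(\kappa\ell+h\ell)=O(h^2/(h+2\kappa))\cdot O(\kappa\ell+h\ell)=O(h^2)$, uniformly in $h$, both when $\kappa>0$ and when $\kappa=0$. Combining gives $\sup_{x_i\le\ell}|e_i|\le C h^2$, as claimed.

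\textbf{Main obstacle.} As flagged in the discussion after \eqref{1d:Lfdiscbdy}, the subtlety is entirely at $x=0$: when $\kappa=0$ the naive local truncation error there is only $O(h)$, so second-order global accuracy is not a mechanical consistency-plus-stability consequence, but relies on the fact that the error committed at the origin is effectively weighted by the expected occupation time of the walk at the origin, which is itself $O(h)$ when $\kappa=0$. Making the bound on the expected number of visits to $0$ (equivalently the discrete Green's function $G^h(x_i,0)$) uniform in $h$, and tracking its $\kappa$-dependence carefully so that the product with $|r_0|$ is $O(h^2)$ uniformly in $\kappa\ge 0$, is the one place that needs genuine care; the remaining ingredients — M-matrix invertibility, It\^o's formula for jump processes, and the Taylor expansion — are routine given the regularity hypothesis on $u$ and Lemma~\ref{l:mfpt_Yt}.
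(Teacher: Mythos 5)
Your proposal is correct and takes essentially the route the paper intends: the paper omits this proof, stating only that it parallels the proof of Theorem~\ref{thm:sbm_ibvp}, and your two-step structure (finite-difference discretization plus It\^o's formula for jump processes to get the stochastic representation, then a consistency-plus-Feynman--Kac error estimate) is exactly that parallel. You in fact go further than the paper by making precise the occupation-time argument at the origin --- that the $O\bigl(h^2/(h+2\kappa)\bigr)$ local error at $x=0$ is weighted by an expected occupation time of order $\kappa\ell + h\ell$, yielding $O(h^2)$ uniformly in $\kappa\ge 0$ --- which the paper only sketches informally in the remark following \eqref{1d:Lfdiscbdy}.
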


A proof of this theorem is like the proof of Theorem~\ref{thm:sbm_ibvp} and therefore omitted.

\section{Other characterizations of sticky Brownian motion}\label{sec:1dprobability}

Interestingly, sticky boundary conditions were discovered in the pure mathematics literature, well before they were used in applied math or physics.  This section summarizes some of the other ways of characterizing SBM that have been developed in the mathematical literature, and connects them to our numerical and asymptotic approach. A nice history focusing especially on Feller's contribution to the development is given in \cite{Peskir:2015be}. 
To more easily make a connection to the existing literature, we consider in this section a traditional sticky Brownian motion, whose generator is $\mathcal L^0f=\half \partial_{xx}f$ with boundary condition $\partial_x f|_{x=0} = \kappa \partial_{xx}f|_{x=0}$. 
The numerical solution of this process is the same as that given in section \ref{sec:1dnumerics}, but with numerical generator $Q^0 = Q/2$, where $Q$ is the generator for a root-2 sticky Brownian motion, defined in \eqref{1d:Ldisc}. 

Sticky diffusion processes were discovered by Feller in the 1950s, who sought to identify the most general 
boundary conditions for the generator of a one-dimensional diffusion  \cite{Feller:1952te,Feller:1957ua}. For a process on $[0,\infty)$ with continuous sample paths that behaves like a Brownian motion in $(0,\infty)$, the possible boundary conditions have the form \cite{Feller:1952te,Ito:1963wl,Peskir:2015be}
\begin{equation} \label{eq:most_general_bc}
p_1f(0) - p_2f'(0) + p_3f''(0) = 0\,,
\end{equation}
with $p_1,p_2,p_3 \geq 0$, and $p_1+p_2+p_3=1$ (c.f.~\eqref{eq:1dibvp}).
Feller said of his result \cite{Feller:1952te}: 
\begin{quote}
This is the second instance of a concrete problem with physical significance where physical intuition failed, but our abstract methods provided a clue.
\end{quote}
Feller was referring to problems in genetics, which have certain singularities in the diffusion coefficients when a population size hits zero, but his remark could equally well apply to the sticky boundary condition $f'(0) = \frac{p_3}{p_2}f''(0)$; physical intuition does not immediately show why this should be associated with stickiness, and a singularity in the transition probabilities.

Subsequently It\^{o} \& McKean  showed how to construct 
an SBM from a time change of a reflecting Brownian motion \cite{Ito:1963wl}. 
Let $B_t^+$ be a standard reflecting Brownian motion, which can be constructed from a standard Brownian motion $B_t$ as $B^+_t = |B_t|$. Let 
\[
l^0_t(B^+) = \lim_{\epsilon\to 0}\frac{1}{2\epsilon}\int_0^t1(B_s^+ < \epsilon) ds
\] be the local time accumulated at $0$ by $B^+$ over $[0,t]$.  Define
\[
A_t = t+ 2\kappa\: l^0_t(B^+) \;.
\]
 Since $l^0_t(B^+)$ is continuous and nondecreasing, $A_t$ is continuous and strictly increasing, so it has an inverse $T(t) = A^{-1}(t)$.  Define
\begin{equation}\label{1d:Bstar}
B^*_t = B^+_{T(t)}\,.
\end{equation}
Then $B^*_t$ is an SBM \cite{Ito:1963wl}.   

The process $B^*_t$ is simply a time change of a reflecting Brownian motion, making it run on clock $T(t)$ instead of clock $t$. When $B^*_t\neq 0$, then $dA/dt= dT/dt= 1$ so the process runs at the same rate as $B^+_t$. When $B^*_t=0$, then $A(t)$ increases faster than $t$ because of the local time, so the clock $T(t)$ runs more slowly and $B^*_t$ slows down at $0$. 

To see why it slows down in a way that changes its measure at zero, let's compute the MFPT for $B^*_t$ to leave the interval $[0,h]$, starting at 0. 
Let $\tau = \inf\{ t>0 : B_t^+ > h \}$ be the first exit time of $B_t^+$ from $[0,h]$, so that $\E( \tau) =h^2$ and the MFPT of $B_t^*$ is given by  \[
\E( A_{\tau} ) = h^2 + 2 \kappa \E( \ell_\tau^0(B^+) ) = h^2 + 
2 \kappa \lim_{\epsilon \to 0} \frac{1}{2 \epsilon} \E( \int_0^\tau1(B_s^+ < \epsilon) ds ) \;.
\]  
By a Feynman-Kac formula, the expected value $u(x)=\E_x( \int_0^\tau1(B_s^+ < \epsilon) ds )$ can be calculated by solving the boundary value problem \[
\frac{1}{2} u''(x) = - 1_{[0,\epsilon]}(x) \;, ~~ x \in [0, h] \;, ~~ u'(0)=0,~~ u(h)=0 \;, \] 
whose local solution at zero is $u(0) = 2 \epsilon h - \epsilon^2$.  Hence, 
$\E( \ell_{\tau}^0(B) ) = h$ and $\E( A_{\tau} ) = h^2 + 2 \kappa h$, which is twice the mean waiting time we derived for a root-2 SBM in Lemma \ref{l:mfpt_sbm}, and in particular, this waiting time is \emph{not} diffusive like the mean waiting time of $B_t^+$.

A different discrete approximation to a SBM was derived by Amir \cite{Amir:1991wb}, who showed how to obtain the process as a limit of random walks. Recall that one can construct a standard Brownian motion starting from a random walk by using Donsker's Theorem \cite{donsker1951}. Let 
\[
S_{\Delta x}(t) = \Delta x\sum_{i=1}^k X_i\quad \text{for}\quad k\Delta t \leq t < (k+1)\Delta t
\]
be a rescaled random walk, 
where $\{X_i\}_{i=1}^\infty$ is a sequence of independent, identically distributed random variables with $P(X_i=1)=P(X_i=-1) = \frac{1}{2}$.
As $\Delta x\to 0$ with $\Delta t = (\Delta x)^2$, the process $S_{\Delta x}(t)$ converges in distribution to a Brownian motion $B_t$ \cite{donsker1951,Feller:BookVol2}. 

Amir \cite{Amir:1991wb} showed that one can construct a sticky (nonreflecting) Brownian motion, by modifying $S_{\Delta x}$ as follows: every time $S_{\Delta x}(t)$ hits zero, wait there for a time interval of length $\sqrt{\Delta t}$, instead of $\Delta t$. For example, one can let $\Delta x = 2^{-n}$, $\Delta t = 2^{-2n}$ for some integer $n$, and then the process must wait $2^n$ timesteps each time it hits zero. As $n\to \infty$, the modified process $S^*_{n}(t)$ converges to a sticky nonreflecting Brownian motion. A sticky reflecting Brownian motion is then the limit of $|S^*_{n}(t)|$.

Finally, we remark that an SBM $B^*_t$ solves the following 
SDE, which should be interpreted in an integrated sense  (see \cite{IkWa1989, Engelbert:2014gg}, and references therein): 
\begin{equation}\label{sbm_SDE}
dB^*_t = \frac{1}{2\kappa}1(B^*_t=0)dt + 1(B^*_t>0)dB_t\,.
\end{equation}
This equation has a unique weak solution, but no strong solution \cite{Engelbert:2014gg}. 
That it corresponds to the sticky boundary condition can be seen by noting that\footnote{This perspective is due to R. Varadhan, personal communication.} the generator away from the origin is $\mathcal A f = \half \partial_{xx} f$, and at the origin it is $\mathcal A f = \frac{1}{2\kappa}\partial_x f$, so to be consistent the function $f$ must satisfy the sticky boundary condition $\half \partial_{xx} f = \frac{1}{2\kappa}\partial_x f$. 

It may seem quite remarkable that changing the diffusion coefficient at a single point, the origin, can have such a dramatic effect on the process. To see why this is so, consider widening the region near the origin by some amount $\epsilon\ll 1$,  approximating the drift as constant in this interval, and estimating 
 the increment $\Delta B_t^*$ over a small time interval $\Delta t$. 
If $B^*_t < \epsilon$, the increment is $\Delta B^*_t\approx \frac{1}{2\kappa}\Delta t$, so it takes the process a time of about $\Delta t \approx2\kappa \epsilon$ to leave the interval $[0,\epsilon)$. 
If $B^*_t > \epsilon$, the increment has magnitude
 $|\Delta B^*_t|\approx |\Delta B_t| \approx \sqrt{\Delta t}$, so it takes the process a time of about $\Delta t \approx \epsilon^2$ to travel the length of any other interval of length $\epsilon$. We recover the same scalings as for the time-changed formula \eqref{1d:Bstar}, showing that the slowdown near the origin has a singular effect on its transition densities. Even though the diffusion coefficient changes at only one point, the difference between $dt$ and $dB_t$ at that point gives rise to the singular change in timescales.



\section{Conclusion and Outlook} \label{sec:conclusion}

We considered a reflecting Brownian motion on a half-line with a deep but short-ranged potential energy near the origin. As the potential becomes narrower and deeper, the process approaches a ``sticky'' Brownian motion,  which has finite probability to be found in any interval on the half-line, as well as finite probability to be found exactly at $\{0\}$; counterintuitively it never spends an interval of time at 0. The process is characterized by a non-classical boundary condition for its generator, involving second derivatives. 
We showed that simulating trajectories of a process that is close to sticky using a traditional Euler-Maruyama discretization of its SDE requires very small timesteps. This motivated us to introduce an alternative method, based on discretizing space first and constructing a continuous-time Markov chain on the set of discretization points, which allows time steps at least 2 orders of magnitude larger in parameter regimes of physical relevance (at the expense of making a small error in estimating the transition probability.) The method results in a random walk on the nonnegative integers with random holding times, with a larger holding time at $0$. The holding times give insight into why the process has a singular probability density at zero, since the holding time scales ballistically with step size at the origin, rather than diffusively. 

Our motivation came from studying systems of mesoscale particles (diameters $\approx 100$nm-$10\mu$m), which have attractive interactions that are very short-ranged compared to their diameters. For such particles, the structures and dynamics are often more effectively studied by considering the system in the sticky limit \cite{HolmesCerfon:2013jw,Perry:2015ku,HolmesCerfon:2017hz}. Our goal is to eventually create numerical methods to simulate such particle systems directly in the sticky limit, for which we hope to see a similarly large gain in efficiency. 


Achieving this goal will require building on the ideas in this paper to handle higher-dimensional sticky diffusions with more complicated boundary conditions. There are several steps involved. The first is to create methods to handle $d$-dimensional diffusions that are sticky on a half-space of dimension $d-1$. The new ingredient here is that the diffusion can move directly along the boundary, with dynamics that are different from those in the interior. This will be feasible by discretizing the generators in the domain and on its boundary, as we have done in this paper. We expect such a method to be efficient even when $d$ is too large to solve PDEs numerically, since the generator must only be discretized locally, using two points per dimension, and not globally over the whole domain \cite{BoVa2018}.

A second step is to adapt these methods to work on manifolds, since a collection of interacting sticky particles performs a diffusion on the manifold corresponding to certain fixed distance constraints \cite{HolmesCerfon:2013jw}. It may be a challenge to retain the method's second-order accuracy when on a manifold, however it may not be necessary to do so, since the sticky limit is an approximation anyways. Other methods have considered how to sample probability densities directly on manifolds and have shown themselves to be significantly more efficient than using short-range forces to keep a process near a manifold (e.g. \cite{Ciccotti:2007fv,Lelievre:2012id,Zappa:2017ub}.)

A third step, and perhaps the most challenging, will be to adapt the method to processes which are sticky on even lower-dimensional ``corners''. For example, a $d$-dimensional process may stick to a $d-1$-dimensional boundary, from which it may stick to a $d-2$-dimensional boundary, and so on. Physically this could correspond to a system of particles forming one bond, then two, then three, and so on. The number of intersecting boundaries increases as one moves down in dimension, and dealing with the whole collection of boundaries together may require further approximation.

We have focused in this paper exclusively on one-dimensional sticky Brownian motion, in order to build intuition into this unusual process, and to introduce it to the applied math community. Along the way we discussed some of the connections to the probabilistic approaches to studying sticky diffusions. We hope these ideas will be useful for other researchers studying sticky diffusions in the myriad of contexts in which they may arise, from biology to materials science to finance to operations research, and other applications yet to be imagined.


\appendix

\section{Listings} \label{sec:listings}

In the MATLAB file \texttt{SEM.m} in Listing~\ref{list:SEM}, we apply the symmetrized Euler-Maruyama scheme to the SDE \eqref{1d:dX} with $U^{\epsilon}(x)$ being a Morse potential energy function (see Example~\ref{ex:morse}) with parameters $D_e=8.5$, $a=\sqrt{\pi}e^{D_e} /(\kappa \sqrt{D_e})$, and $x_0=1/a$; and the sticky parameter set at $\kappa=30$. A single realization of a discretized Brownian motion is produced over $[0,1]$ with $\delta t=10^{-6}$. 

This realization of Brownian motion is used to drive  symmetrized Euler-Maruyama operated at two time step sizes: $\delta t$ and $R \delta t$ where $R=16$ \cite{Hi2001}.  We set the seed for MATLAB's random number generator, arbitrarily, to be 88 using the command \texttt{rng}.  The plotted paths illustrate that the time step size of symmetrized Euler-Maruyama has to be sufficiently small in order for the method to accurately represent the strong, short-ranged Morse potential force.

Listing~\ref{list:SRW} displays the MATLAB file \texttt{SRW.m} which produces a realization of the sticky random walk over the time interval $[0,T]$ with initial condition $x_0=0.2$, sticky parameter $\kappa=30$, and spatial step size $h=0.01$.  The simulation is terminated after the time update first exceeds $T$.  We set the seed for MATLAB's random number generator, arbitrarily, to be 999 with the command \texttt{rng}.  A single sample paths is plotted as a stairstep graph using the command \texttt{stairs}.

\lstinputlisting[language=Matlab,frame=single,basicstyle=\ttfamily\scriptsize,float, caption={Symmetrized Euler-Maruyama: {\tt SEM.m}}, label=list:SEM]{SEM.m}

\lstinputlisting[language=Matlab,frame=single,basicstyle=\ttfamily\scriptsize,float, caption={Sticky Random Walk: {\tt SRW.m}}, label=list:SRW]{SRW.m}

\section*{Acknowledgments}
We wish to acknowledge Robert Kohn, Eric Vanden-Eijnden, and Srinivasa Varadhan for useful discussions that helped to improve this paper.

\bibliographystyle{siamplain}
\bibliography{nawaf,miranda}

\end{document}